\newtheorem{theorem}{Theorem}
\newtheorem{lemma}[theorem]{Lemma}
\newtheorem{corollary}[theorem]{\rm\bfseries Corollary}
\begin{document}
	\title{Higher order invariants of a graph based on the path sequence}
	\author{Yirong Cai, Zikai Tang, Hanyuan Deng\thanks{Corresponding author:
			hydeng@hunnu.edu.cn}\\
		{\footnotesize College of Mathematics and Statistics, Hunan Normal University, Changsha, Hunan 410081, P. R. China} \\
	}
	\date{}
	\maketitle
	
	\begin{abstract}
		Let $G=(V,E)$ be a simple and connected graph. A $h$-order invariant of $G$ based on the path sequence is defined from a set of real numbers ${f(x_{0},x_{1},\cdots,x_{h})}$ as $^{h}I_f(G)=\sum\limits_{v_{0}v_{1}v_{2}\cdots v_{h}}f\left(d_{0},d_{1},\cdots,d_{h}\right)$, where the sum runs over all paths $v_{0}v_{1}v_{2}\cdots v_{h}$ of length $h$ and $d_{i}$ is the degree of vertex $v_i$ in $G$. In this paper, we first show that the $h$-order invariant of a starlike tree $S_{n}$ can be determined completely by its branches whose length does not exceed $h$. And then we find conditions on the function $f$ for some graph families $\mathcal{G}$ such that any graph $G\in\mathcal{G}$ can be determined by the higher order invariants $^{h}I_f(G)$ for $0\leqslant h\leqslant \rho$, where $\rho$ is the length of a longest path in $G$.\\
		\noindent
		{\bf Keywords}: Higher-order invariant; Path sequence; Starlike tree; Generalized starlike tree.
		\end{abstract}
		\maketitle
	
	\makeatletter
	\renewcommand\@makefnmark%
	{\mbox{\textsuperscript{\normalfont\@thefnmark)}}}
	\makeatother
	
	\section{Introduction}\label{intro}
	
	Graph invariant or parameter is one of the fundamental concepts in graph theory, which refers to the number of features of a graph that remains unchanged in the sense of combinatorial isomorphism. The number of vertices and the number of edges of a graph are two simple graph invariants, and the degree sequence of a graph is also a graph invariant. A graph parameter or graph invariant can reflect the structural properties of a graph and is of great significance in graph theory. In particular, the graph invariant of a molecular structure in chemistry is also called a molecular descriptor or topological index. A topological index is a numerical method of molecular structure, which is an invariant of molecular graphs and directly generated from the molecular structure, it reflects the structural characteristics of the compound. Because the molecular topological index is easy to calculate, objective and not limited by experience and experiment, it is widely used. The construction of QSPR/QSAR model based on topological indices \cite{balaban2000historical,devillers2000topological}, physicochemical properties and activity invariants of compounds, and the evaluation and prediction of their properties have become one of the most active fields in mathematics chemistry, and have been paid more and more attention by mathematicians and chemists \cite{betancur2015vertex,gutman2013degree,rada2013vertex}.
	
	Among all topological indices, the topological indices based on vertex degree have attracted much attention in recent studies \cite{dovslic2011vertex,furtula2013structure,gutman2013testing,horoldagva2011some,rada2013vertex}, which are defined by vertex degree of molecular graphs. Given a graph $G$ with $n$ vertices, a vertex-degree-based topological index \cite{betancur2015vertex} is defined from a set of real numbers ${\varphi_{i,j}}(1\leqslant i\leq j \leqslant n-1)$ as
	$$I(G)= \sum_{1\leqslant i\leqslant j \leqslant n-1}m_{ij}\varphi_{i,j}$$
	where $m_{ij}$ denotes the number of edges between vertices of degree $i$ and degree $j$. In other words,
	$$I(G)= \sum_{uv\in{E}}\varphi(d(u),d(v)),$$
	it is a invariant of $G$ based on the edge-degree sequence. Different choices of ${\varphi(x,y)}$ give different topological indices. For example, it is the Randi\'{c} index for $\varphi(d(x),d(y))=\frac{1}{\sqrt{d(x)d(y)}}$; it is the sum-connectivity index for $\varphi(d(x),d(y))=\frac{1}{\sqrt{d(x)+d(y)}}$; it is the harmonic index for $\varphi(d(x),d(y))=\frac{2}{d(x)+d(y)}$.

A path of length $h$ in $G$ is a sequence $v_0v_1\cdots v_h$, where the vertices $v_0,v_1,\cdots,v_h$ are distinct, $v_{i-1}$ and $v_{i}$ are adjacent, $i=1,2,\cdots,h$ (see \cite{Bondy1976}). In this paper, $v_0v_1\cdots v_h$ and its inverse $v_hv_{h-1}\cdots v_0$ are counted as one path.
	
	For an integer $h\geq 0$, the $h$-connectivity index of a graph $G$ is defined as
	$$^{h}\chi(G)=\sum\limits_{v_{0}v_{1}v_{2}\cdots v_{h}}\frac{1}{\sqrt{d_{0}d_{1}\cdots d_{h}}},$$
	where the sum runs over all paths $v_{0}v_{1}v_{2}\cdots v_{h}$ of length $h$ and $d_{i}=d_{G}(v_i)$ is the degree of vertex $v_i$ in $G$. The higher order connectivity indices are of great interest in the theory of molecular graph theory and some of its mathematical properties have been reported, see \cite{rada2002higher} and the cited literature. 	
	
	In this paper, we generalize the vertex-degree-based topological index and the higher order connectivity indices, and introduce a higher order invariant based on the path sequence of a graph. Let $G$ be a simple and connected graph with vertex set $V(G)$ and edge set $E(G)$. A $h$-order invariant of $G$ based on the path sequence is defined from a set of real numbers ${f(x_{0},x_{1},\cdots,x_{h})}$ with ${f(x_{0},x_{1},\cdots,x_{h})}={f(x_{h},x_{h-1},\cdots,x_{0})}$ as
	$$^{h}I_f(G)=\sum\limits_{v_{0}v_{1}v_{2}\cdots v_{h}}f\left(d_{0},d_{1},\cdots,d_{h}\right),$$
	where the sum runs over all paths $v_{0}v_{1}v_{2}\cdots v_{h}$ of length $h$ and $d_{i}=d_{G}(v_i)$ is the degree of vertex $v_i$ in $G$.
	
	For a path $v_{0}v_{1}\cdots v_{h}$ with length $h$ in $G$, the sequence $\left(d_{0},d_{1},\cdots,d_{h}\right)$ is called the degree sequence of $v_{0}v_{1}\cdots v_{h}$. Then the $h$-order invariant $^{h}I_f(G)$ based on the path sequence can also be written as
	$$^{h}I_f(G)=\sum_{\left(d_{0},d_{1},\cdots,d_{h}\right)}\omega\left(d_{0},d_{1},\cdots,d_{h}\right)f\left(d_{0},d_{1},\cdots,d_{h}\right),$$
	where $\omega\left(d_{0},d_{1},\cdots,d_{h}\right)$ denotes the number of paths with the degree sequence $\left(d_{0},d_{1},\cdots,d_{h}\right)$ in $G$.
	
	For $h=0$, the zero-order invariant of $G$ is $^{0}I_f(G)=\sum\limits_{v\in V(G)}f\left(d(v)\right)$. It can be seen as a invariant or topological index based on the degree sequences of a graph $G$. If we take $f(x)=x$, then $^{0}I_f(G)=\sum\limits_{v \in V(G)}d(v)=2|E(G)|$; If we take $f(x)=x^{\alpha}$, where $\alpha$ is a real number, then $^{0}I_f(G)=\sum\limits_{v\in V(G)}d^{\alpha}(v)$ is the zeroth-order general Randi\'{c} index \cite{li2005}.
	
	For $h=1$, the $1$-order invariant of $G$ is $^{1}I_f(G)=\sum\limits_{uv \in E(G)}f\left(d(u),d(v))\right)$. It is just the vertex-degree-based topological index \cite{betancur2015vertex}.
	
	If we take the function $f\left(x_{0},x_{1},\cdots,x_{h}\right)=\frac{1}{\sqrt{x_{0}x_{1}\cdots x_{h}}}$, then $^{h}I_f(G)$ is the $h$-order connectivity index \cite{rada2002higher}. In 2002, Rada and Araujo \cite{rada2002higher} showed that the higher order connectivity index of a starlike tree is completely determined by its branches of length $\leqslant h$, and starlike trees which have equal $h$-connectivity index for all $h\geqslant 0$ are isomorphic. If $f\left(x_{0},x_{1},\cdots,x_{h}\right)\equiv1$, then $^{h}I_f(G)$ is the number of the paths with length $h$ \cite{cai2024}. Recently, Cai and Deng \cite{cai2024} showed that some graphs can be determined by the path sequence. Combining these results, we will first consider the $h$-order invariant of a starlike tree $S_{n}$ and show that it is determined entirely by its branches whose length does not exceed $h$. Then, we explore the conditions on the function $f: \bigcup_{i=1}^{\infty} \mathbb{Z}_{+}^{i} \rightarrow \mathbb{R}$ for some graph families $\mathcal{G}$ such that any graph $G\in\mathcal{G}$ can be determined by the higher order invariants $^{h}I_f(G)$ for $0\leqslant h\leqslant \rho$, where $\rho$ is the length of a longest path in $G$.

	\section{Properties of the higher-order invariants for some graphs}
	
	In this section, we first consider the higher-order invariants $^{h}I_f$ of a starlike tree related to its branches. Let $S_{n}$ be a starlike tree with $n$ vertices and root (i.e., center) $v_{0}$, the degree $d(v_0)=m>2$. We denote by $L_{l}$ the number of $l$-branches in $S_{n}$, where a $l$-branch is a path with length $l$ from the root $v_0$ to a pendant vertex, $l=1,\cdots,t$ and $t$ represents the length of a largest branch in $S_{n}$. Clearly, a starlike tree $S_n$ is completely determined by its $L_1,L_2,\cdots,L_t$. In fact, Rada and Araujo gave a method for calculating the number of paths in a starlike tree in the proof of Theorem 2.3 in \cite{rada2002higher}. This method is classified and calculated based on the sequence of degrees of vertices on the path.
	
	For a graph $G$, $\mathcal{P}_{h}(G)$ denotes the set of all paths with length $h$ in $G$, $\mathbb{N}=\{0,1,\cdots\}$ the set of natural numbers. The function $\Psi_{h} \,\,(\,or\,\, \Psi_{G} ): \mathcal{P}_{h}(G) \rightarrow \mathbb{N}^{h+1}$ is defined by $\Psi_{h}(\pi)=\left(d(v_{1}),d(v_{2}),\cdots,d(v_{h+1})\right)$, where $\pi=v_{1}v_{2}\cdots v_{h+1}$ is a path with length $h$ in $G$. So, a path in $\mathcal{P}_{h}(G)$ can be acted as a natural number sequence (i.e., degree sequence of the path) by the function $\Psi_{h}$. Thus, the number of paths is equivalent to the number of original images of degree sequences under $\Psi_{h}$.
	
	From the proof of Theorem 2.3 in \cite{rada2002higher}, we can get
	
	\begin{lemma} \cite{rada2002higher}\label{l-1}
		The path of length $h$ in $S_{n}$ can be counted by the following three types.\\
		\textbf{Type 1}
		The possible image under $\Psi_{h}$ of all paths in $\mathcal{P}_{h}(S_{n})$ which contain $v_{0}$ as an end-vertex is $X_{1}=(m, \underbrace{2,\cdots,2}_{h-1}, 1)$ or
		$X_{2}=(m, \underbrace{2,\cdots,2}_{h})$, then the number of paths in $\mathcal{P}_{h}(S_{n})$ whose images under $\Psi_{h}$ is $X_{1}$ or $X_{2}$ are
		\[ \left|\Psi_{h}^{-1}\left(X_{1}\right)\right|=L_{h}, \]
		\[\left|\Psi_{h}^{-1}\left(X_{2}\right)\right|=L_{h+1}+\cdots+L_{t}=m-\sum_{i=1}^{h} L_{i}.\]
		\textbf{Type 2}
		The possible image under $\Psi_{h}$ of all paths in $ \mathcal{P}_{h}(S_{n})$ which do not contain $v_{0}$ is $Y_{1}=(\underbrace{2,\cdots,2}_{h+1})$ or $Y_{2}=(1,\underbrace{2,\cdots,2}_{h})$,  then the number of paths in $\mathcal{P}_{h}(S_{n})$ whose images under $\Psi_{h}$ is $Y_{1}$ or $Y_{2}$ are
		\[\left|\Psi_{h}^{-1}\left(Y_{1}\right)\right|=(n-1)-\sum_{i=1}^{h}iL_{i}-(h+1)\left(m-\sum_{i=1}^{h}L_{i}\right),\]
		\[\left|\Psi_{h}^{-1}\left(Y_{2}\right)\right|=L_{h+1}+\cdots+L_{t}=m-\sum_{i=1}^{h}L_{i}. \]
		\textbf{Type 3}
		The possible image under $\Psi_{h}$ of all paths in $ \mathcal{P}_{h}(S_{n})$ which contain $v_{0}$ but not as an end-vertex of the path is
		\[Z_{1}(a)=(1,\underbrace{2,\cdots,2}_{a},m,\underbrace{2,\cdots,2}_{h-1-a}), \quad0\leqslant a\leqslant h-2\] or
		\[Z_{2}(a)=(1, \underbrace{2, \cdots, 2}_{a}, m, \underbrace{2, \cdots, 2}_{h-2-a}, 1), \quad\left\{\begin{array}{l}0 \leqslant a \leqslant \frac{h}{2}-1, h \text { is even } \\ 0 \leqslant a \leqslant \frac{h-1}{2}-1, h \text { is odd }\end{array}\right.\] or
		\[Z_{3}(a)=(\underbrace{2, \cdots, 2}_{a}, m, \underbrace{2, \cdots, 2}_{h-a}), \quad\left\{\begin{array}{l}1 \leqslant a \leqslant \frac{h}{2}, h \text { is even } \\ 1 \leqslant a \leqslant \frac{h-1}{2}, h \text { is odd}\end{array}\right.\]
		then the number of paths in $\mathcal{P}_{h}(S_{n})$ whose images under $\Psi_{h}$ is $Z_{1}$ or $Z_{2}$ or $Z_{3}$ are
		\[| \Psi_{h}^{-1}(Z_{1}(a)) |=\left\{\begin{array}{cc}\left\{\begin{array}{cc}L_{a+1}\left(m-\sum_{i=1}^{h-(a+1)}L_{i}\right) & \text { if } 0 \leqslant a \leqslant \frac{h}{2}-1 \\ L_{a+1}\left(m-1-\sum_{i=1}^{h-(a+1)} L_{i}\right) & \text { if } \frac{h}{2} \leqslant a \leqslant h-2\end{array}\right\} \text { if } h \text { is even, } \\ \left\{\begin{array}{cc}L_{a+1}\left(m-\sum_{i=1}^{h-(a+1)} L_{i}\right) & \text { if } 0 \leqslant a<\frac{h-1}{2} \\ L_{a+1}\left(m-1-\sum_{i=1}^{h-(a+1)} L_{i}\right) & \text { if } \frac{h-1}{2} \leqslant a \leqslant h-2\end{array}\right\} \text { if } h \text { is odd, }\end{array}\right.\]
		
		$$\left|\Psi_{h}^{-1}\left(\mathrm{Z}_{2}(a)\right)\right|=\left\{\begin{array}{c}\left\{\begin{array}{c}L_{a+1} L_{h-a-1} \quad \text { if } 0 \leqslant a<\frac{h}{2}-1 \\ \frac{1}{2}\left(L_{a+1}-1\right) L_{a+1} \quad \text { if } a=\frac{h}{2}-1\end{array}\right\} \text { if } h \text { is even, } \\ \left\{L_{a+1} L_{h-a-1} \quad \text { if } 0 \leqslant a \leqslant \frac{h-1}{2}-1\right\} \text { if } h \text { is odd, }\end{array}\right.$$
		$$\begin{array}{l}\left|\Psi_{h}^{-1}\left(\mathrm{Z}_{3}(a)\right)\right|=\left\{\begin{array}{l}\left\{\begin{array}{cc}\left[m-\sum_{i=1}^{h-a} L_{i}\right]\left[m-1-\sum_{i=1}^{a} L_{i}\right] \text { if } 1 \leqslant a \leqslant \frac{h}{2}-1 \\ \frac{1}{2}\left[m-\sum_{i=1}^{a} L_{i}\right]\left[m-1-\sum_{i=1}^{a} L_{i}\right] \quad \text { if } a=\frac{h}{2}\end{array}\right\}\text { if } h \text { is even, }\\ \left\{\left[m-\sum_{i=1}^{h-a} L_{i}\right]\left[m-1-\sum_{i=1}^{a} L_{i}\right] \quad \text { if } 1 \leqslant a \leqslant \frac{h-1}{2}\right\} \text { if } h \text { is odd. }\end{array}\right.\end{array}	$$
	\end{lemma}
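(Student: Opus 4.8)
The plan is to prove the three enumeration formulas by a single purely combinatorial case analysis according to the position of the root $v_0$ on a path of length $h$: either $v_0$ is an end-vertex of the path (Type 1), or $v_0$ is absent from the path (Type 2), or $v_0$ is an interior vertex of the path (Type 3). These possibilities are mutually exclusive and exhaustive, so it suffices to list, within each type, the degree sequences that can occur and to count the paths realizing each one. I would first record the structural facts used repeatedly: in $S_n$ the root has degree $m$, every pendant vertex has degree $1$, every other vertex has degree $2$, there are exactly $m$ branches, and (counting non-root vertices branch by branch) $\sum_{i=1}^{t}L_i=m$ and $\sum_{i=1}^{t} i L_i=n-1$. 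Since $S_n$ is a tree, a path is determined by its pair of end-vertices, and once a path starts at or reaches $v_0$ and enters a particular branch it must continue along that branch; hence a path is completely described by naming the branch(es) it uses and how far it travels into each.

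For Type 1 the path lies inside one branch with $v_0$ at one end: it reaches the pendant vertex precisely when that branch has length exactly $h$ (degree sequence $X_1$, contributing $L_h$ paths), and otherwise ends at an interior vertex, which happens for exactly one path in each branch of length $\geq h+1$ (degree sequence $X_2$, contributing $L_{h+1}+\cdots+L_t=m-\sum_{i=1}^{h}L_i$). For Type 2 the path again lies in one branch but avoids $v_0$, hence inside the subpath on the $\ell$ non-root vertices of a branch of length $\ell$; such a branch contains $\ell-h$ subpaths of length $h$ when $\ell\geq h+1$, exactly one of which contains the pendant vertex (degree sequence $Y_2$) while the other $\ell-h-1$ have all degrees $2$ (degree sequence $Y_1$). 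Summing the pendant ones over all branches gives $m-\sum_{i=1}^{h}L_i$, and summing $\sum_{\ell\geq h+1}(\ell-h-1)L_\ell$ and substituting the two identities above gives the closed form stated for $Y_1$.

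The essential case is Type 3, where the path splits at $v_0$ into two ``arms'', each entering a branch and ending either at a pendant vertex or at a degree-$2$ vertex; the four leaf/non-leaf possibilities give the three shapes $Z_2$ (two pendant ends), $Z_1$ (one pendant end, the two ``mixed'' orientations being reverses of each other), $Z_3$ (no pendant end). Parametrizing by the number $a$ of degree-$2$ vertices on the appropriate arm, I would count each shape as a product of the two arms' contributions via the bookkeeping rule: an arm traversing a branch of length $\ell$ all the way to its pendant vertex contributes a factor $L_\ell$, while an arm entering a branch to depth $d$ and stopping at a degree-$2$ vertex contributes a factor equal to the number of branches of length $\geq d+1$, namely $m-\sum_{i=1}^{d}L_i$. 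Two adjustments then complete the count. First, the two arms must enter \emph{different} branches, so whenever the branch required by one arm also satisfies the length condition of the other arm one of the factors must be decremented by $1$; this is the source of the $m-1-\sum$ factors, and the set of $a$ that triggers it is determined by comparing the two arm-lengths, which yields the even/odd case split in the statement. Second, since a path and its reverse are identified, $Z_2(a)$ is the reverse of $Z_2(h-2-a)$ and $Z_3(a)$ the reverse of $Z_3(h-a)$, so for these two shapes the range of $a$ must be cut to ``half'', with the symmetric middle value ($a=\tfrac{h}{2}-1$ for $Z_2$, $a=\tfrac{h}{2}$ for $Z_3$, arising only when $h$ is even) replaced by an unordered choice of two distinct branches --- exactly the $\tfrac12(L_{a+1}-1)L_{a+1}$ and $\tfrac12[\,\cdot\,][\,\cdot\,]$ terms --- whereas $Z_1$, being asymmetric, keeps its full range $0\leq a\leq h-2$.

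I expect the only genuinely delicate point to be this last piece of bookkeeping in Type 3: pinning down, for each shape and each parity of $h$, the exact sub-range of $a$ on which the ``distinct branches'' correction applies, and the exact location of the symmetric boundary term. Once the three elementary inequalities governing the shapes ($a+1$ versus $h-a$ for $Z_1$, $a+1$ versus $h-1-a$ for $Z_2$, $a+1$ versus $h-a+1$ for $Z_3$) are resolved, the ranges and boundary formulas displayed in the lemma drop out, and everything else is routine substitution of $\sum_i L_i=m$ and $\sum_i iL_i=n-1$.
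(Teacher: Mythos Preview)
Your proposal is correct and follows exactly the approach the paper attributes to \cite{rada2002higher}: the lemma is stated there without proof, merely noting that Rada and Araujo's method ``is classified and calculated based on the sequence of degrees of vertices on the path,'' and your three-way split according to the position of $v_0$ together with the arm-by-arm counting and the reversal/distinct-branch corrections is precisely that classification carried out in full. There is nothing to add.
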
	
	
	The following result shows that the $h$-order invariant $^hI_f$ of a starlike tree can be completely determined by its branches with length $\leq h$ and the function $f: \mathbb{Z}_{+}^{h+1} \rightarrow \mathbb{R}$.
	
	\begin{theorem}\label{t-2}
		Given a function $f:\mathbb{Z}_{+}^{h+1} \rightarrow \mathbb{R}$. $S_{n}$ is a starlike tree with $n$ vertices and $m$ is the degree of its root. Then the higher-order invariant $^{h}I_f$ of $S_{n}$ is completely determined by its branches whose length are not more than $h$, i.e.,
		\begin{align*}
			^{h}I_f(S_{n})=\overline{\lambda}+\overline{\mu}L_{h}
		\end{align*}
		where 
$\overline{\mu}=f(X_{1})-f(X_{2})+f(Y_{1})-f(Y_{2})$ is a real number determined by $X_{1}$, $X_{2}$, $Y_{1}$ and $Y_{2}$.
		More precisely, for $h>4$, the higher-order invariant $^{h}I_f$ of $S_{n}$ can be written by
		\begin{align*}
			^{h}I_f(S_{n})=&\lambda+\left[3f(Y_{1})-f(X_{2})-f(Y_{2})+L_1(f(Z_3(1))-f(Z_1(0))+f(Z_3(2))-f(Z_1(h-3)))\right.\\
			&\left.+L_2\left(f(Z_2(1))-f(Z_1(1))+f(Z_3(2))-f(Z_1(h-3))\right)\right.\\
			&\left.+(m-1)\left(f(Z_1(h-3))-f(Z_3(1))-f(Z_3(2))\right)\right]L_{h-2}\\
			&+\left[2f(Y_{1})-f(X_{2})-f(Y_{2})+L_1\left(f(Z_2(0))-f(Z_1(0))+f(Z_3(1))-f(Z_1(h-2))\right)\right.\\
			&\left.+(m-1)\left(f(Z_1(h-2))-f(Z_3(1))\right)\right]L_{h-1}\\&+\left[f(X_{1})-f(X_{2})+f(Y_{1})-f(Y_{2})\right]L_{h}
		\end{align*}
		where $\lambda$ is a real number. 
	\end{theorem}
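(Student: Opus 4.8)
The plan is to expand ${}^{h}I_f(S_n)$ directly from Lemma~\ref{l-1}. Since every path of length $h$ in $S_n$ has one of the degree sequences listed there,
$$^{h}I_f(S_n)=\sum_{S}\left|\Psi_h^{-1}(S)\right|f(S),$$
where $S$ ranges over $X_1,X_2,Y_1,Y_2$ and over the $Z_1(a),Z_2(a),Z_3(a)$ in their admissible ranges. Substituting the multiplicities from Lemma~\ref{l-1} (and using $m=\sum_{i=1}^{t}L_i$, $n-1=\sum_{i=1}^{t}iL_i$, so that branches of length $>h$ are recorded only through $m$ and $n$) turns the right-hand side into an explicit expression in $n,m,L_1,\dots,L_t$ and the values of $f$. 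The decisive point is that $L_h$ occurs in none of the Type~3 multiplicities: in $|\Psi_h^{-1}(Z_1(a))|$, $|\Psi_h^{-1}(Z_2(a))|$ and $|\Psi_h^{-1}(Z_3(a))|$ every $L$ that appears has index at most $h-1$ for the admissible values of $a$. Hence $L_h$ enters ${}^{h}I_f(S_n)$ only through $|\Psi_h^{-1}(X_1)|=L_h$, $|\Psi_h^{-1}(X_2)|=|\Psi_h^{-1}(Y_2)|=m-\sum_{i=1}^{h}L_i$ and $|\Psi_h^{-1}(Y_1)|=(n-1)-\sum_{i=1}^{h}iL_i-(h+1)\bigl(m-\sum_{i=1}^{h}L_i\bigr)$.

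Collecting the coefficient of $L_h$ gives $1$ from $X_1$, $-1$ from each of $X_2$ and $Y_2$, and $-h+(h+1)=1$ from $Y_1$, so this coefficient equals $\overline{\mu}=f(X_1)-f(X_2)+f(Y_1)-f(Y_2)$. Every remaining term is an expression in $n,m,L_1,\dots,L_{h-1}$; denoting their sum by $\overline{\lambda}$ gives ${}^{h}I_f(S_n)=\overline{\lambda}+\overline{\mu}L_h$. In particular, with $n$ and $m$ fixed, ${}^{h}I_f(S_n)$ depends only on $L_1,\dots,L_h$, that is, on the branches of $S_n$ of length at most $h$.

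For the refined formula when $h>4$ I would repeat this accounting one and two levels deeper, pulling the coefficients of $L_{h-1}$ and of $L_{h-2}$ out of $\overline{\lambda}$. One first observes that $h>4$ forces each relevant case of Lemma~\ref{l-1} into its generic branch: $|\Psi_h^{-1}(Z_1(h-2))|=L_{h-1}(m-1-L_1)$ and $|\Psi_h^{-1}(Z_1(h-3))|=L_{h-2}(m-1-L_1-L_2)$ (the ``$m-1$'' alternative), $|\Psi_h^{-1}(Z_2(0))|=L_1L_{h-1}$ and $|\Psi_h^{-1}(Z_2(1))|=L_2L_{h-2}$ (the product alternative), $|\Psi_h^{-1}(Z_3(1))|$ and $|\Psi_h^{-1}(Z_3(2))|$ in non-halved form, and the indices $0,1,h-3,h-2$ pairwise distinct. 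Then one checks that $L_{h-1}$ occurs only in the multiplicities of $X_2,Y_1,Y_2$ (with coefficients $-1,2,-1$) and of $Z_1(0),Z_1(h-2),Z_2(0),Z_3(1)$, while $L_{h-2}$ occurs only in those of $X_2,Y_1,Y_2$ (with coefficients $-1,3,-1$) and of $Z_1(0),Z_1(1),Z_1(h-3),Z_2(1),Z_3(1),Z_3(2)$. Grouping these contributions by $L_1$, $L_2$ and $m-1$ reproduces exactly the bracketed coefficients of $L_{h-1}$ and $L_{h-2}$ in the statement, and what remains — an expression in $n,m,L_1,\dots,L_{h-3}$ — is the number $\lambda$.

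The first display and the identification of $\overline{\mu}$ are immediate once one notices that $L_h$ never appears in a Type~3 count. The main obstacle is the last step: assembling the correct short list of Type~3 images whose multiplicity involves $L_{h-1}$ or $L_{h-2}$, reading off the exact coefficient each one contributes (via a leading factor $L_{a+1}$, via a partial sum $\sum_{i=1}^{k}L_i$, or via the ``$m-1$'' term), and verifying that $h>4$ really does eliminate every exceptional ``$-1$'' or ``$\tfrac12$'' case that would otherwise spoil the clean form. After that, collecting terms is routine.
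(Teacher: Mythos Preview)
Your proposal is correct and follows essentially the same approach as the paper: both expand ${}^{h}I_f(S_n)$ via the multiplicities in Lemma~\ref{l-1} and then collect the coefficients of $L_h$, $L_{h-1}$, $L_{h-2}$. The paper carries this out by writing the full expansion explicitly (separately for $h=0,1,2,3,4$ and for $h\ge 5$ odd/even) and then regrouping, whereas you isolate directly which Type~1--3 multiplicities can contain a given $L_i$; the substance of the argument is the same.
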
	
	
	\begin{proof}
		Let $\mathcal{P}_h$ be the set of all paths with length $h$ in $S_n$. From Lemma \ref{l-1} and $$^{h}I_f(G)=\sum_{\left(d_{0},d_{1},\cdots,d_{h}\right)}\omega\left(d_{0},d_{1},\cdots,d_{h}\right)f\left(d_{0},d_{1},\cdots,d_{h}\right),$$
		where $\omega\left(d_{0},d_{1},\cdots,d_{h}\right)$ denotes the number of paths with the degree sequence $\left(d_{0},d_{1},\cdots,d_{h}\right)$ in $G$, we can get
		\begin{equation*}
			\begin{aligned}
				^{0}I_f(S_{n})=&\sum_{v\in V(S_{n})}\omega(d(v))f(d(v))=f(m)+mf(1)+(n-m-1)f(2)\\
				=&f\left(m\right)+(n-1)f(2)+\left[f\left(1\right)-f\left(2\right)\right]m\\
				^{1}I_f(S_{n})=&\sum_{v_{0}v_{1}\in\mathcal{P}_{1}}\omega(d(v_0),d(v_1))f\left(d(v_{0}),d(v_{1})\right)\\
				=&\left|\Psi_{S_{n}}^{-1}\left(m,1\right)\right|f(m,1)+\left|\Psi_{S_{n}}^{-1}\left(m,2\right)\right|f(m,2)+\left|\Psi_{S_{n}}^{-1}\left(1,2\right)\right|f(1,2)+\left|\Psi_{S_{n}}^{-1}\left(2,2\right)\right|f(2,2)\\
				=&\left[f(m,2)+f(1,2)-2f(2,2)\right]m+(n-1)f(2,2)\\
				&+\left[f(m,1)-f(m,2)-f(1,2)+f(2,2)\right]L_{1}\\
				=&\overline{\lambda}(m,n)+\left[f(m,1)-f(m,2)+f(2,2)-f(1,2)\right]L_{1}
			\end{aligned}
		\end{equation*}
		
		\begin{equation*}
			\begin{aligned}
				^{2}I_f(S_{n})=&\sum_{v_{0}v_{1}v_{2}\in\mathcal{P}_{2}}\omega(d(v_0),d(v_1),d(v_2))f\left(d(v_{0}),d(v_{1}),d(v_{2})\right)\\
				=&\left|\Psi_{S_{n}}^{-1}\left(X_{1}\right)\right|f(X_{1})+\left|\Psi_{S_{n}}^{-1}\left(X_{2}\right)\right|f(X_{2})+\left|\Psi_{S_{n}}^{-1}\left(Y_{1}\right)\right|f(Y_{1})+\left|\Psi_{S_{n}}^{-1}\left(Y_{2}\right)\right|f(Y_{2})\\
				&+\left|\Psi_{S_{n}}^{-1}\left(Z_{1}(0)\right)\right|f(Z_{1}(0))+\left|\Psi_{S_{n}}^{-1}\left(Z_{2}(0)\right)\right|f(Z_{2}(0))+\left|\Psi_{S_{n}}^{-1}\left(Z_{3}(1)\right)\right|f(Z_{3}(1))\\
				=&L_{2}f(X_{1})+(m-L_{1}-L_{2})f(X_{2})+[(n-1)-L_{1}-2L_{2}-3(m-L_{1}-L_{2})]f(Y_{1})\\
				&+(m-L_{1}-L_{2})f(Y_{2})+L_{1}(m-L_{1})f(Z_{1}(0))+\frac{1}{2}L_{1}(L_{1}-1)f(Z_{2}(0))\\&+\frac{1}{2}(m-L_{1})(m-L_{1}-1)f(Z_{3}(1))\\
				=&\left[f(X_{1})-f(X_{2})+f(Y_{1})-f(Y_{2})\right]L_{2}+\left[2f(Y_{1})-f(X_{2})-f(Y_{2})\right]L_{1}\\
				&+mf(X_{2})+(n-1-3m)f(Y_{1})+mf(Y_{2})+L_{1}(m-L_{1})f(Z_{1}(0))\\
				&+\frac{1}{2}L_{1}(L_{1}-1)f(Z_{2}(0))+\frac{1}{2}(m-L_{1})(m-L_{1}-1)f(Z_{3}(1))\\
				=&\overline{\lambda}(n,m,L_{1})+\left[f(X_{1})-f(X_{2})+f(Y_{1})-f(Y_{2})\right]L_{2}
			\end{aligned}
		\end{equation*}
		
		\begin{equation*}
			\begin{aligned}
				^{3}I_f(S_{n})=&\sum_{v_{0}v_{1}v_{2}v_{3}\in\mathcal{P}_{3}}\omega(d(v_0),d(v_1),d(v_2),d(v_3))f\left(d(v_{0}),d(v_{1}),d(v_{2}),d(v_3)\right)\\
				=&\left|\Psi_{S_{n}}^{-1}\left(X_{1}\right)\right|f(X_{1})+\left|\Psi_{S_{n}}^{-1}\left(X_{2}\right)\right|f(X_{2})+\left|\Psi_{S_{n}}^{-1}\left(Y_{1}\right)\right|f(Y_{1})+\left|\Psi_{S_{n}}^{-1}\left(Y_{2}\right)\right|f(Y_{2})\\
				&+\left|\Psi_{S_{n}}^{-1}\left(Z_{1}(0)\right)\right|f(Z_{1}(0))+\left|\Psi_{S_{n}}^{-1}\left(Z_{1}(1)\right)\right|f(Z_{1}(1))+\left|\Psi_{S_{n}}^{-1}\left(Z_{2}(0)\right)\right|f(Z_{2}(0))\\
				&+\left|\Psi_{S_{n}}^{-1}\left(Z_{3}(1)\right)\right|f(Z_{3}(1))\\
				=&L_{3}f(X_{1})+(m-L_{1}-L_{2}-L_{3})f(X_{2})+(m-L_{1}-L_{2}-L_{3})f(Y_{2})\\
				&+[n-1-L_{1}-2L_{2}-3L_{3}-4(m-L_{1}-L_{2}-L_{3})]f(Y_{1})+L_{1}(m-L_{1}-L_{2})f(Z_{1}(0))\\&+L_{2}(m-1-L_{1})f(Z_{1}(1))+L_{1}L_{2}f(Z_{2}(0))+(m-L_{1}-L_{2})(m-L_{1}-1)f(Z_{3}(1))\\
			\end{aligned}
		\end{equation*}
		
		\begin{equation*}
			\begin{aligned}
				=&\left[f(X_{1})-f(X_{2})+f(Y_{1})-f(Y_{2})\right]L_{3}+\left[2f(Y_{1})-f(X_{2})-f(Y_{2})\right]L_{2}\\&+\left[3f(Y_{1})-f(X_{2})-f(Y_{2})\right]L_{1}+mf(X_{2})+(n-1-4m)f(Y_{1})+mf(Y_{2})\\&+L_{1}(m-L_{1}-L_{2})f(Z_{1}(0))\\&+L_{2}(m-1-L_{1})f(Z_{1}(1))+L_{1}L_{2}f(Z_{2}(0))+(m-L_{1}-L_{2})(m-L_{1}-1)f(Z_{3}(1))\\
				=&\overline{\lambda}(n,m,L_{1},L_{2})+\left[f(X_{1})-f(X_{2})+f(Y_{1})-f(Y_{2})\right]L_{3}
			\end{aligned}
		\end{equation*}
		
		\begin{equation*}
			\begin{aligned}
				^{4}I_f(S_{n})=&\sum_{v_{0}v_{1}v_{2}v_{3}v_{4}\in\mathcal{P}_{4}}\omega(d(v_0),d(v_1),d(v_2),d(v_3),d(v_4))f\left(d(v_{0}),d(v_{1}),d(v_{2}),d(v_3),d(v_4)\right)\\
				=&\left|\Psi_{S_{n}}^{-1}\left(X_{1}\right)\right|f(X_{1})+\left|\Psi_{S_{n}}^{-1}\left(X_{2}\right)\right|f(X_{2})+\left|\Psi_{S_{n}}^{-1}\left(Y_{1}\right)\right|f(Y_{1})+\left|\Psi_{S_{n}}^{-1}\left(Y_{2}\right)\right|f(Y_{2})\\&+\left|\Psi_{S_{n}}^{-1}\left(Z_{1}(0)\right)\right|f(Z_{1}(0))+\left|\Psi_{S_{n}}^{-1}\left(Z_{1}(1)\right)\right|f(Z_{1}(1))+\left|\Psi_{S_{n}}^{-1}\left(Z_{1}(1)\right)\right|f(Z_{1}(2))\\&+\left|\Psi_{S_{n}}^{-1}\left(Z_{2}(0)\right)\right|f(Z_{2}(0))+\left|\Psi_{S_{n}}^{-1}\left(Z_{2}(0)\right)\right|f(Z_{2}(1))+\left|\Psi_{S_{n}}^{-1}\left(Z_{3}(1)\right)\right|f(Z_{3}(1))\\&+\left|\Psi_{S_{n}}^{-1}\left(Z_{3}(2)\right)\right|f(Z_{3}(2))\\
				=&L_{4}f(X_{1})+\left(m-\sum_{i=1}^{4}L_{i}\right)f(X_{2})+[n-1-\sum_{i=1}^{4}iL_{i}-5\left(m-\sum_{i=1}^{4}L_{i}\right)]f(Y_{1})\\
				&+\left(m-\sum_{i=1}^{4}L_{i}\right)f(Y_{2})+L_{1}\left(m-\sum_{i=1}^{3}L_{i}\right)f(Z_{1}(0))+L_{2}\left(m-\sum_{i=1}^{2}L_{i}\right)f(Z_{1}(1))\\&+L_{3}\left(m-1-L_{1}\right)f(Z_{1}(2))+L_{1}L_{3}f(Z_{2}(0))+\frac{1}{2}L_{2}(L_{2}-1)f(Z_{2}(1))\\&+\left(m-\sum_{i=1}^{3}L_{i}\right)(m-L_{1}-1)f(Z_{3}(1))+\frac{1}{2}(m-L_{1}-L_{2})(m-1-L_{1}-L_{2})f(Z_{3}(2))\\
				=&\left[f(X_{1})-f(X_{2})+f(Y_{1})-f(Y_{2})\right]L_{4}+\left[2f(Y_{1})-f(X_{2})-f(Y_{2})\right]L_{3}\\&+\left[3f(Y_{1})-f(X_{2})-f(Y_{2})\right]L_{2}+(m-L_{1})f(X_{2})+(n-1-5m+4L_{1})f(Y_{1})\\&+(m-L_{1})f(Y_{2})+L_{1}\left(m-\sum_{i=1}^{3}L_{i}\right)f(Z_{1}(0))+L_{2}\left(m-\sum_{i=1}^{2}L_{i}\right)f(Z_{1}(1))\\&+L_{3}\left(m-1-L_{1}\right)f(Z_{1}(2))+L_{1}L_{3}f(Z_{2}(0))+\frac{1}{2}L_{2}(L_{2}-1)f(Z_{2}(1))\\&+\left(m-\sum_{i=1}^{3}L_{i}\right)(m-L_{1}-1)f(Z_{3}(1))+\frac{1}{2}(m-L_{1}-L_{2})(m-1-L_{1}-L_{2})f(Z_{3}(2))\\
				=&\overline{\lambda}(n,m,L_{1},L_{2},L_{3})+\left[f(X_{1})-f(X_{2})+f(Y_{1})-f(Y_{2})\right]L_{4}
			\end{aligned}
		\end{equation*}
		
		If $h\geqslant5$ is odd, then
		\begin{equation*}
			\begin{aligned}
				^{h}I_f(S_{n})=&\sum_{v_{0}v_{1}\cdots v_{h}\in\mathcal{P}_{h}}\omega(d(v_0),d(v_1),\cdots,d(v_h))f\left(d(v_0),d(v_1),\cdots,d(v_h)\right)\\
				=&\left|\Psi_{S_{n}}^{-1}\left(X_{1}\right)\right|f(X_1)+\left|\Psi_{S_{n}}^{-1}\left(X_{2}\right)\right|f(X_2)
				+\left|\Psi_{S_{n}}^{-1}\left(Y_{1}\right)\right|f(Y_1)+\left|\Psi_{S_{n}}^{-1}\left(Y_{2}\right)\right|f(Y_2)\\
				&+\sum_{a=0}^{h-2}\left|\Psi_{S_{n}}^{-1}\left(Z_{1}(a)\right)\right|f(Z_1(a))+\sum_{a=0}^{\frac{h-3}{2}}\left|\Psi_{S_{n}}^{-1}\left(Z_2(a)\right)\right|f(Z_2(a))
				+\sum_{a=1}^{\frac{h-1}{2}}\left|\Psi_{S_{n}}^{-1}\left(Z_3(a)\right)\right|f(Z_3(a)) \\
			\end{aligned}
		\end{equation*}
		
		\begin{equation*}
			\begin{aligned}				
				=&L_{h}f(X_1)+\left(m-\sum_{i=1}^{h}L_{i}\right)f(X_2)\\
				&+\left[(n-1)-\sum_{i=1}^{h}iL_{i}-(h+1)\left(m-\sum_{i=1}^{h}L_{i}\right)\right]f(Y_1)+\left(m-\sum_{i=1}^{h} L_{i}\right)f(Y_2)\\
				&+\left[\sum_{a=0}^{\frac{h-3}{2}}L_{a+1}\left(m-\sum_{i=1}^{h-(a+1)}L_{i}\right)+\sum_{a=\frac{h-1}{2}}^{h-2}L_{a+1}\left(m-1-\sum_{i=1}^{h-(a+1)} L_{i}\right)\right]f(Z_1(a))\\
				&+\left(\sum_{a=0}^{\frac{h-1}{2}-1}L_{a+1} L_{h-a-1}\right)f(Z_2(a))+\left[\sum_{a=1}^{\frac{h-1}{2}}\left(m-\sum_{i=1}^{h-a} L_{i}\right)\left(m-1-\sum_{i=1}^{a} L_{i}\right)\right]f(Z_3(a))\\
				=&\left[f(X_{1})-f(X_{2})+f(Y_{1})-f(Y_{2})\right]L_{h}+\left[2f(Y_{1})-f(X_{2})-f(Y_{2})\right]L_{h-1}\\
				&+\left[3f(Y_{1})-f(X_{2})-f(Y_{2})\right]L_{h-2}\\
				&+\left(m-\sum_{i=1}^{h-3}L_{i}\right)f(X_2)+\left[n-1-\sum_{i=1}^{h-3}iL_{i}-(h-1)\left(m-\sum_{i=1}^{h-3}L_{i}\right)\right]f(Y_{1})\\
				&+\left(m-\sum_{i=1}^{h-3}L_{i}\right)f(Y_{2})+L_1\left(m-\sum_{i=1}^{h-1}L_{i}\right)f(Z_1(0))+L_2\left(m-\sum_{i=1}^{h-2}L_{i}\right)f(Z_1(1))\\
				&+\sum_{a=2}^{\frac{h-3}{2}}L_{a+1}\left(m-\sum_{i=1}^{h-(a+1)}L_{i}\right)f(Z_1(a))+L_{h-1}\left(m-1-L_1\right)f(Z_1(h-2))\\
				&+L_{h-2}\left(m-1-L_1-L_2\right)f(Z_1(h-3))+\sum_{a=\frac{h-1}{2}}^{h-4}L_{a+1}\left(m-1-\sum_{i=1}^{h-(a+1)} L_{i}\right)f(Z_1(a))\\
				&+\left(\sum_{a=2}^{\frac{h-3}{2}}L_{a+1} L_{h-a-1}\right)f(Z_2(a))+L_{1}L_{h-1}f(Z_2(0))+L_{2}L_{h-2}f(Z_2(1))
				\\
				&+\left[\sum_{a=3}^{\frac{h-1}{2}}\left(m-\sum_{i=1}^{h-a} L_{i}\right)\left(m-1-\sum_{i=1}^{a} L_{i}\right)\right]f(Z_3(a))\\
				&+\left(m-\sum_{i=1}^{h-1} L_{i}\right)\left(m-1-L_{1}\right)f(Z_3(1))+\left(m-\sum_{i=1}^{h-2} L_{i}\right)\left(m-1-L_{1}-L_{2}\right)f(Z_3(2))\\
			\end{aligned}
		\end{equation*}
		\begin{equation*}
			\begin{aligned}
				=&\lambda\left[n,m,h,L_{1},\cdots,L_{h-3},f(X_{1}),f(X_{2}),f(Y_{1}),f(Y_{2}),f(Z_1(a)),f(Z_2(a)),f(Z_3(a))\right]\\
				&+\left[f(X_{1})-f(X_{2})+f(Y_{1})-f(Y_{2})\right]L_{h}\\
				&+\left[2f(Y_{1})-f(X_{2})-f(Y_{2})+L_1\left(f(Z_2(0))-f(Z_1(0))+f(Z_3(1))-f(Z_1(h-2))\right)\right.\\
				&\left.+(m-1)\left(f(Z_1(h-2))-f(Z_3(1))\right)\right]L_{h-1}\\&+\left[3f(Y_{1})-f(X_{2})-f(Y_{2})+L_1(f(Z_3(1))-f(Z_1(0))+f(Z_3(2))-f(Z_1(h-3)))\right.\\
				&\left.+L_2\left(f(Z_2(1))-f(Z_1(1))+f(Z_3(2))-f(Z_1(h-3))\right)\right.\\
				&\left.+(m-1)\left(f(Z_1(h-3))-f(Z_3(1))-f(Z_3(2))\right)\right]L_{h-2}\\
			\end{aligned}
		\end{equation*}
		
		If $h\geqslant6$ is even, then
		\begin{equation*}
			\begin{aligned}
				^{h}I_f(S_{n})=&\sum_{v_{0}v_{1}\cdots v_{h}\in\mathcal{P}_{h}}\omega(d(v_0),d(v_1),\cdots,d(v_h))f\left(d(v_0),d(v_1),\cdots,d(v_h)\right)\\
				=&\left|\Psi_{S_{n}}^{-1}\left(X_{1}\right)\right|f(X_1)+\left|\Psi_{S_{n}}^{-1}\left(X_{2}\right)\right|f(X_2)+\left|\Psi_{S_{n}}^{-1}\left(Y_{1}\right)\right|f(Y_1)+\left|\Psi_{S_{n}}^{-1}\left(Y_{2}\right)\right|f(Y_2)\\
				&+\sum_{a=0}^{h-2}\left|\Psi_{S_{n}}^{-1}\left(Z_{1}(a)\right)\right|f(Z_1(a))+\sum_{a=0}^{\frac{h}{2}-1}\left|\Psi_{S_{n}}^{-1}\left(Z_2(a)\right)\right|f(Z_2(a))+\sum_{a=1}^{\frac{h}{2}}\left|\Psi_{S_{n}}^{-1}\left(Z_3(a)\right)\right|f(Z_3(a)) \\
				=&L_{h}f(X_1)+\left(m-\sum_{i=1}^{h}L_{i}\right)f(X_2)+\left[(n-1)-\sum_{i=1}^{h}iL_{i}-(h+1)\left(m-\sum_{i=1}^{h}L_{i}\right)\right]f(Y_1)\\
				&+\left(m-\sum_{i=1}^{h} L_{i}\right)f(Y_2)+\sum_{a=0}^{\frac{h-4}{2}}L_{a+1} L_{h-a-1}f(Z_2(a))+\frac{1}{2}L_{\frac{h}{2}}\left(L_{\frac{h}{2}}-1\right)f(Z_2(\frac{h}{2}-1))\\
				&+\left[\sum_{a=0}^{\frac{h-2}{2}}L_{a+1}\left(m-\sum_{i=1}^{h-(a+1)}L_{i}\right)+\sum_{a=\frac{h}{2}}^{h-2}L_{a+1}\left(m-1-\sum_{i=1}^{h-(a+1)} L_{i}\right)\right]f(Z_1(a))\\&+\left[\sum_{a=1}^{\frac{h-2}{2}}\left(m-\sum_{i=1}^{h-a} L_{i}\right)\left(m-1-\sum_{i=1}^{a} L_{i}\right)\right]f(Z_3(a))\\&+\frac{1}{2}\left(m-\sum_{i=1}^{\frac{h}{2}} L_{i}\right)\left(m-1-\sum_{i=1}^{\frac{h}{2}} L_{i}\right)f(Z_3(\frac{h}{2}))\\
				=&\left[f(X_{1})-f(X_{2})+f(Y_{1})-f(Y_{2})\right]L_{h}+\left[2f(Y_{1})-f(X_{2})-f(Y_{2})\right]L_{h-1}\\
				&+\left[3f(Y_{1})-f(X_{2})-f(Y_{2})\right]L_{h-2}+\left(m-\sum_{i=1}^{h-3}L_{i}\right)f(X_2)\\
				&+\left[n-1-\sum_{i=1}^{h-3}iL_{i}-(h-1)\left(m-\sum_{i=1}^{h-3}L_{i}\right)\right]f(Y_{1})+\left(m-\sum_{i=1}^{h-3}L_{i}\right)f(Y_{2})\\
				&+L_1\left(m-\sum_{i=1}^{h-1}L_{i}\right)f(Z_1(0))+L_2\left(m-\sum_{i=1}^{h-2}L_{i}\right)f(Z_1(1))\\
				&+\sum_{a=2}^{\frac{h-2}{2}}L_{a+1}\left(m-\sum_{i=1}^{h-(a+1)}L_{i}\right)f(Z_1(a))+L_{h-1}\left(m-1-L_1\right)f(Z_1(h-2))\\
				&+L_{h-2}\left(m-1-L_1-L_2\right)f(Z_1(h-3))+\sum_{a=\frac{h}{2}}^{h-4}L_{a+1}\left(m-1-\sum_{i=1}^{h-(a+1)} L_{i}\right)f(Z_1(a))\\
				&+\left(\sum_{a=3}^{\frac{h-4}{2}}L_{a+1} L_{h-a-1}\right)f(Z_2(a))+\frac{1}{2}L_{\frac{h}{2}}\left(L_{\frac{h}{2}}-1\right)f(Z_2(\frac{h}{2}-1))+L_{1}L_{h-1}f(Z_2(0))\\
				&+L_{2}L_{h-2}f(Z_2(1))+\left[\sum_{a=3}^{\frac{h-2}{2}}\left(m-\sum_{i=1}^{h-a} L_{i}\right)\left(m-1-\sum_{i=1}^{a} L_{i}\right)\right]f(Z_3(a))\\
			\end{aligned}
		\end{equation*}
		
		\begin{equation*}
			\begin{aligned}&+\left(m-\sum_{i=1}^{h-1} L_{i}\right)\left(m-1-L_{1}\right)f(Z_3(1))+\left(m-\sum_{i=1}^{h-2} L_{i}\right)\left(m-1-L_{1}-L_{2}\right)f(Z_3(2))\\
				&+\frac{1}{2}\left(m-\sum_{i=1}^{\frac{h}{2}} L_{i}\right)\left(m-1-\sum_{i=1}^{\frac{h}{2}} L_{i}\right)f(Z_3(\frac{h}{2}))\\
				=&\lambda\left[n,m,h,L_{1},\cdots,L_{h-3},f(X_{1}),f(X_{2}),f(Y_{1}),f(Y_{2}),f(Z_1(a)),f(Z_2(a)),f(Z_3(a))\right]\\
				&+\left[f(X_{1})-f(X_{2})+f(Y_{1})-f(Y_{2})\right]L_{h}\\
				&+\left[2f(Y_{1})-f(X_{2})-f(Y_{2})+L_1\left(f(Z_2(0))-f(Z_1(0))+f(Z_3(1))-f(Z_1(h-2))\right)\right.\\
				&\left.+(m-1)\left(f(Z_1(h-2))-f(Z_3(1))\right)\right]L_{h-1}\\&+\left[3f(Y_{1})-f(X_{2})-f(Y_{2})+L_1(f(Z_3(1))-f(Z_1(0))+f(Z_3(2))-f(Z_1(h-3)))\right.\\
				&\left.+L_2\left(f(Z_2(1))-f(Z_1(1))+f(Z_3(2))-f(Z_1(h-3))\right)\right.\\
				&\left.+(m-1)\left(f(Z_1(h-3))-f(Z_3(1))-f(Z_3(2))\right)\right]L_{h-2}\\
			\end{aligned}
		\end{equation*}
	\end{proof}
	
	Specially, if $f\equiv 1$, then we can get
	
	\begin{corollary}\label{c-3}\cite{cai2024}
		If $S_{n}$ is a starlike tree with $n$ vertices and the degree $m>2$ of its root, then the number of paths with length $h$ in $S_n$ is
		$$P_{h}(S_{n})=|\mathcal{P}_{h}(S_{n})|=\lambda(n,m,h,L_{1},\cdots,L_{h-3})+\mu(n,m,h,L_{1},\cdots,L_{h-3})L_{h-2}$$
		where $\lambda(n,m,h,L_{1},\cdots,L_{h-3})$ is a real number determined by the values of $n,m,h,L_{1},\cdots,L_{h-3}$ and $\mu(n,m,h,L_{1},\cdots,L_{h-3})=2-m, h>2$.
	\end{corollary}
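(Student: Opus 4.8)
The plan is to obtain Corollary~\ref{c-3} as the specialization $f\equiv 1$ of Theorem~\ref{t-2}. First observe that when $f$ is the constant function $1$ on $\mathbb{Z}_{+}^{h+1}$, the weighted sum $^{h}I_f(S_n)=\sum_{(d_0,\dots,d_h)}\omega(d_0,\dots,d_h)\cdot 1$ simply counts every path of length $h$, so $^{h}I_f(S_n)=|\mathcal{P}_h(S_n)|=P_h(S_n)$. Hence it suffices to evaluate the right-hand side of Theorem~\ref{t-2} at $f\equiv 1$ and to read off the coefficient of $L_{h-2}$ together with the remaining term.

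For $h>4$ this is immediate from the refined formula in Theorem~\ref{t-2}. Each bracketed coefficient there is an integer linear combination of values of $f$ on the sequences $X_i,Y_i,Z_i(a)$; setting all those values equal to $1$, the coefficient of $L_h$ becomes $1-1+1-1=0$, the coefficient of $L_{h-1}$ becomes $(2-1-1)+L_1(1-1+1-1)+(m-1)(1-1)=0$, and the coefficient of $L_{h-2}$ becomes $(3-1-1)+L_1\cdot 0+L_2\cdot 0+(m-1)(1-1-1)=1-(m-1)=2-m$. The leftover term $\lambda[\dots]$ in Theorem~\ref{t-2} then becomes a real number that still depends only on $n,m,h,L_1,\dots,L_{h-3}$ (evaluating at a fixed function introduces no new variables), which is exactly the asserted $\lambda(n,m,h,L_1,\dots,L_{h-3})$. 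Thus $P_h(S_n)=\lambda(n,m,h,L_1,\dots,L_{h-3})+(2-m)L_{h-2}$ for all $h>4$.

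For the two small cases $h=3$ and $h=4$ (where $h-3$ is $0$, respectively $1$, so $\lambda$ is permitted to depend on $n,m$, respectively on $n,m,L_1$, and $L_{h-2}$ is $L_1$, respectively $L_2$), I would instead start from the explicit expressions for $^{3}I_f(S_n)$ and $^{4}I_f(S_n)$ produced inside the proof of Theorem~\ref{t-2}, set every $f$-value to $1$, and simplify the resulting polynomial in $L_1,L_2,L_3$. The point to check is that after this substitution all the quadratic and cross terms, arising from the products $L_{a+1}(m-\sum_{i}L_i)$, $L_1L_2$, and $(m-\sum_i L_i)(m-1-\sum_i L_i)$, cancel, the variable $L_3$ drops out, and what survives is precisely (a polynomial in $n,m,L_1,\dots,L_{h-3}$) plus $(2-m)L_{h-2}$; for instance $h=3$ gives $P_3(S_n)=(m^2-3m+n-1)+(2-m)L_1$, and an entirely analogous computation settles $h=4$. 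This finite cancellation check is the only real work; I expect it, rather than the $h>4$ case (which is a one-line substitution), to be where care is needed.
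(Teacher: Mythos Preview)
Your proposal is correct and is exactly the paper's approach: the paper derives Corollary~\ref{c-3} by the one-line remark ``Specially, if $f\equiv 1$'', i.e.\ by specializing Theorem~\ref{t-2}, and your write-up simply makes explicit the substitution and the coefficient computation (including the observation that the refined formula in Theorem~\ref{t-2} covers $h>4$ directly while $h=3,4$ require the explicit expressions from its proof). Your computed coefficients $0,0,2-m$ for $L_h,L_{h-1},L_{h-2}$ and the sample value $P_3(S_n)=m^2-3m+n-1+(2-m)L_1$ are correct.
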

	
	If $f\left(x_{0},x_{1},\cdots,x_{h}\right)=\frac{1}{\sqrt{x_{0}x_{1}\cdots x_{h}}}$, then we get
	
	\begin{corollary}\label{c-4}\cite{rada2002higher}
		If $S_{n}$ is a starlike tree with $n$ vertices and the degree $m>2$ of its root, then the $h$-connectivity index of $S_n$ is
		$$^{h}\chi(S_{n})=\lambda(n,m,h,L_{1},\cdots,L_{h-1})+\mu(n,m)L_{h}$$
		where $\lambda(n,m,h,L_{1},\cdots,L_{h-1})$ is a real number determined by the values of $n,m,h,L_{1},\cdots,L_{h-1}$ and $\mu(h,m)=\frac{1}{\sqrt{2^{h-1}m}}-\frac{1}{\sqrt{2^hm}}+\frac{1}{\sqrt{2^{h+1}}}-\frac{1}{\sqrt{2^{h}}}$ is a real number determined by $h$ and $m$.
	\end{corollary}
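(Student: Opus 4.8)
The plan is to derive this as an immediate specialization of Theorem \ref{t-2}. That theorem already establishes, for an arbitrary symmetric weight $f:\mathbb{Z}_+^{h+1}\to\mathbb{R}$, the decomposition
\[
{}^{h}I_f(S_n)=\overline{\lambda}+\overline{\mu}\,L_h,
\]
where $\overline{\lambda}$ collects all terms depending only on $n,m,h$ and the branch counts $L_1,\dots,L_{h-1}$, and where the coefficient of $L_h$ is
\[
\overline{\mu}=f(X_1)-f(X_2)+f(Y_1)-f(Y_2).
\]
Since the $h$-connectivity index ${}^{h}\chi$ is precisely ${}^{h}I_f$ for the Randi\'{c} weight $f(x_0,x_1,\dots,x_h)=\bigl(x_0x_1\cdots x_h\bigr)^{-1/2}$, it suffices to evaluate the four quantities $f(X_1),f(X_2),f(Y_1),f(Y_2)$ for this choice of $f$ and to confirm that the resulting $\overline{\mu}$ equals the stated $\mu(h,m)$.

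Next I would substitute the four canonical degree sequences from Lemma \ref{l-1}. Reading off the multiplicity of each degree value, the sequence $X_1=(m,2,\dots,2,1)$ contains one vertex of degree $m$, exactly $h-1$ vertices of degree $2$, and one pendant, so its degree product is $m\,2^{h-1}$; the sequence $X_2=(m,2,\dots,2)$ has one vertex of degree $m$ and $h$ vertices of degree $2$, giving product $m\,2^{h}$; the sequence $Y_1=(2,\dots,2)$ of length $h+1$ has product $2^{h+1}$; and $Y_2=(1,2,\dots,2)$ has one pendant and $h$ vertices of degree $2$, giving product $2^{h}$. Taking reciprocal square roots yields
\[
f(X_1)=\frac{1}{\sqrt{2^{h-1}m}},\quad
f(X_2)=\frac{1}{\sqrt{2^{h}m}},\quad
f(Y_1)=\frac{1}{\sqrt{2^{h+1}}},\quad
f(Y_2)=\frac{1}{\sqrt{2^{h}}}.
\]
Substituting these into $\overline{\mu}=f(X_1)-f(X_2)+f(Y_1)-f(Y_2)$ gives exactly
\[
\overline{\mu}=\frac{1}{\sqrt{2^{h-1}m}}-\frac{1}{\sqrt{2^{h}m}}+\frac{1}{\sqrt{2^{h+1}}}-\frac{1}{\sqrt{2^{h}}}=\mu(h,m),
\]
and since $\overline{\lambda}$ depends only on the data $n,m,h,L_1,\dots,L_{h-1}$ by Theorem \ref{t-2}, we may rename it $\lambda(n,m,h,L_1,\dots,L_{h-1})$, which completes the derivation.

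The argument is almost entirely mechanical, so there is no genuine obstacle; the only point requiring care is the bookkeeping of how many degree-$2$ entries appear in each of $X_1,X_2,Y_1,Y_2$, since an off-by-one error in the exponent of $2$ would corrupt $\mu(h,m)$. In particular one must note that $X_1$ carries one fewer interior degree-$2$ vertex than $X_2$ (its far endpoint being a pendant rather than another degree-$2$ vertex), and likewise that $Y_1$ has one more degree-$2$ factor than $Y_2$; these two shifts are precisely what produce the two paired terms $\tfrac{1}{\sqrt{2^{h-1}m}}-\tfrac{1}{\sqrt{2^{h}m}}$ and $\tfrac{1}{\sqrt{2^{h+1}}}-\tfrac{1}{\sqrt{2^{h}}}$ in $\mu(h,m)$.
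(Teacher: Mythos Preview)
Your proposal is correct and matches the paper's approach exactly: the paper presents Corollary~\ref{c-4} as the immediate specialization of Theorem~\ref{t-2} to the Randi\'{c} weight $f(x_0,\dots,x_h)=(x_0\cdots x_h)^{-1/2}$, and your explicit evaluation of $f(X_1),f(X_2),f(Y_1),f(Y_2)$ to recover $\mu(h,m)$ is precisely the intended computation.
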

	
	Next, we consider the generalized starlike trees.
	
	A coalescence of two vertex disjoint graphs $G_{1}=(V(G_{1}),E(G_{1}))$ and $G_{2}=(V(G_{2}),E(G_{2}))$ with respect to vertex $u \in G_{1}$ and vertex $v \in G_{2}$, denoted by $G_{1}(u=v)G_{2}$, is obtained from the union of $G_{1}$ and $G_{2} $ by identifying vertices $u$ and $v$.
	
	$G_{n} = K_{n_{1}}(u_{0}=v_{0})S_{n_{2}}$ is called a generalized starlike tree, where $u_{0}\in K_{n_{1}}$ and $v_{0}$ is the root of $S_{n_{2}}$, $K_{n_{1}}$ and $S_{n_{2}}$ are the complete graph with $n_{1}>1$ vertices and a starlike tree with $n_{2}$ vertices. More specifically, renaming both $u_{0}$ and $v_{0}$ with $x_{0}$, where $x_{0} \notin V(K_{n_{1}})\cup V(S_{n_{2}})$, the vertex set of $G_{n}$ is $\left(V(K_{n_{1}}) \backslash\{u_{0}\}\right) \cup\left(V(S_{n_{2}}) \backslash\{v_{0}\}\right) \cup\left\{x_{0}\right\}$.  A branch of length $t$ in $S_{n_{2}}$ is also called a branch of the generalized starlike tree $G_{n}$, and the number of branches with length $t$ is still denoted by $L_{t}$.
	
	In \cite{cai2024}, we gave the method of counting the number of paths with length $h$ in a generalized starlike tree $G_{n} = K_{n_{1}}(u_{0}=v_{0})S_{n_{2}}$. Here, the calculation of the $h$-order invariant of a generalized starlike tree $^{h}I_f(G_{n})$ depends on the degree sequence of the paths with length $h$. Using the calculation method in Lemma \ref{l-1}, we can easily obtain the number of paths with a given degree sequences.
	
	\begin{lemma}\label{l-5}
		Let $G_{n} = K_{n_{1}}(u_{0}=v_{0})S_{n_{2}}$ and $m$ the degree of root in $S_{n_{2}}$. $\mathcal{P}_h(G_n)$ is the set of all paths with length $h$ in $G_{n}$. Then the number $|\mathcal{P}_h(G_n)|$ of paths with length $h$ in $G_n$ can be counted by the possible image of the degree sequence $(d_{v_{0}},d_{v_{1}},\cdots,d_{v_{h}})$ under $\Psi_{h}$:
		
		\textbf{Type 1}
		The possible image under $\Psi_h$ of all paths in $\mathcal{P}_{h}(G_{n})$  containing only vertices in $K_{n_{1}}$ can be divided into three types according to the degree sequence of the paths:
		$$\mathrm{U}=(m+n_{1}-1, \underbrace{n_{1}-1,\cdots,n_{1}-1}_{h})$$
		$$\mathrm{V}=( \underbrace{n_{1}-1,\cdots,n_{1}-1}_{h+1})$$
		$$\mathrm{W(a)}=(\underbrace{n_{1}-1,\cdots,n_{1}-1}_{a},  m+n_{1}-1,\underbrace{n_{1}-1,\cdots,n_{1}-1}_{h-a}),\quad\left\{\begin{array}{l}1 \leqslant a \leqslant \frac{h}{2}, h \text { is even } \\ 1 \leqslant a \leqslant \frac{h-1}{2}, h \text { is odd}\end{array}\right.$$
		Then the number of paths in $\mathcal{P}_{h}(G_{n})$ whose degree sequence under $\Psi_{G_{n}}$ is $\mathrm{U}$, $\mathrm{V}$ or $\mathrm{W(a)}$ is
		$$\left|\Psi_{h}^{-1}\left(\mathrm{U}\right)\right|=\prod_{i=1}^{h}(n_1-i),\;
		\left|\Psi_{h}^{-1}\left(\mathrm{V}\right)\right|=\frac{1}{2}\prod_{i=1}^{h+1}(n_1-i),$$
		$$\begin{array}{l}\left|\Psi_{h}^{-1}\left(\mathrm{W}(a)\right)\right|=\left\{\begin{array}{l}\left\{\begin{array}{cc}\prod\limits_{i=1}^{a}(n_1-i)\prod\limits_{i=1}^{h-a}(n_1-a-i) \text { if } 1 \leqslant a \leqslant \frac{h}{2}-1 \\\dfrac{1}{2}\prod\limits_{i=1}^{a}(n_1-i)\prod\limits_{i=1}^{h-a}(n_1-a-i) \quad \text { if } a=\frac{h}{2}\end{array}\right\}\text { if } h \text { is even, }\\ \left\{\prod\limits_{i=1}^{a}(n_1-i)\prod\limits_{i=1}^{h-a}(n_1-a-i) \quad \text { if } 1 \leqslant a \leqslant \frac{h-1}{2}\right\} \text { if } h \text { is odd. }\end{array}\right.\end{array}.	$$		
		
		\textbf{Type 2}
		The possible image under $\Psi_{h}$ of all paths in $\mathcal{P}_{h}(G_{n})$  containing only vertices in $S_{n_{2}}$ can be divided into three types according to the degree sequence of the paths:\\
		$$\mathrm{X'_{1}}=(m+n_1-1,\underbrace{2,\cdots,2}_{h-1},1),\mathrm{X'_{2}}=(m+n_1-1,\underbrace{2,\cdots,2}_{h})$$
		$$\mathrm{Y'_{1}}=(\underbrace{2,\cdots,2}_{h+1}),\mathrm{Y'_{2}}=(1,\underbrace{2,\cdots,2}_{h})$$
		$$\mathrm{Z'_{1}(a)}=(1,\underbrace{2,\cdots,2}_{a},m+n_1-1,\underbrace{2,\cdots,2}_{h-1-a}), \quad0\leqslant a\leqslant h-2$$
		$$\mathrm{Z'_{2}(a)}=(1, \underbrace{2, \cdots, 2}_{a}, m+n_1-1, \underbrace{2, \cdots, 2}_{h-2-a}, 1), \quad\left\{\begin{array}{l}0 \leqslant a \leqslant \frac{h}{2}-1, h \text { is even } \\ 0 \leqslant a \leqslant \frac{h-1}{2}-1, h \text { is odd }\end{array}\right.$$
		$$\mathrm{Z'_{3}(a)}=(\underbrace{2, \cdots, 2}_{a}, m+n_1-1, \underbrace{2, \cdots, 2}_{h-a}), \quad\left\{\begin{array}{l}1 \leqslant a \leqslant \frac{h}{2}, h \text { is even } \\ 1 \leqslant a \leqslant \frac{h-1}{2}, h \text { is odd}\end{array}\right.$$
		Then the number of paths in $\mathcal{P}_h(G_n)$ whose degree sequence under $\Psi_{G_{n}}$ is $\mathrm{X'_{1}}$, $\mathrm{X'_{2}}$, $\mathrm{Y'_{1}}$, $\mathrm{Y'_{2}}$, $\mathrm{Z'_{1}(a)}$, $\mathrm{Z'_{2}(a)}$ or $\mathrm{Z'_{3}(a)}$ is the same as the result in Lemma \ref{l-1}, i.e., $|\Psi_{h}^{-1}(\mathrm{X'_{i}})|=|\Psi_{h}^{-1}(X_{i})|$, $|\Psi_{h}^{-1}(\mathrm{Y'_{i}})|=|\Psi_{h}^{-1}(Y_{i})|$, $i=1,2$; $|\Psi_{h}^{-1}(\mathrm{Z'_{j}(a)})|=|\Psi_{h}^{-1}(Z_{j})(a)|$, $j=1,2,3$.
		
		\textbf{Type 3}
		The possible image under $\Psi_{G_{n}}$ of all paths in $\mathcal{P}_{h}(G_{n})$  connected by $x_{0}$ between a path of length $a$ in $K_{n_{1}}$ and a path of length $h-a$ in $S_{n_{2}}$ can be divided into two types according to the degree sequence of the paths:
		$$\mathrm{M_1(a)}=(\underbrace{n_{1}-1,\cdots,n_{1}-1}_{a}, m+n_{1}-1,\underbrace{2\cdots,2,2}_{h-a}),\; 1\leqslant a\leqslant h-1$$ $$\mathrm{M_2(a)}=(\underbrace{n_{1}-1,\cdots,n_{1}-1}_{a},  m+n_{1}-1,\underbrace{2\cdots,2,2}_{h-a-1},1),1\leqslant a\leqslant h-1$$
		Then the number of paths in $\mathcal{P}_{h}(G_{n})$ whose degree sequence under $\Psi_{h}$ is $\mathrm{M_1(a)}$ or $\mathrm{M_2(a)}$ is
		$$\left|\Psi_{h}^{-1}\left(\mathrm{M_1(a)}\right)\right|=\prod_{i=1}^{a}\left(n_{1}-i\right)\left(m-\sum_{i=1}^{h-a}L_{i}\right),$$
		$$\left|\Psi_{h}^{-1}\left(\mathrm{M_2(a)}\right)\right|=\prod_{i=1}^{a}\left(n_{1}-i\right)L_{h-a}.$$	
	\end{lemma}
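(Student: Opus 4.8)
The plan is to split $\mathcal{P}_h(G_n)$ according to how a path meets the cut vertex $x_0$ and the two parts, and to count each class by elementary enumeration, using completeness of $K_{n_1}$ for the $K_{n_1}$-side and Lemma~\ref{l-1} for the $S_{n_2}$-side. First I would record two structural facts: $x_0$ is a cut vertex of $G_n$, with $G_n-x_0$ equal to the disjoint union of $K_{n_1}-x_0$ and $S_{n_2}-x_0$; and $\deg_{G_n}(x_0)=m+n_1-1$, while every vertex of $V(K_{n_1})\setminus\{x_0\}$ has degree $n_1-1$ in $G_n$ and every vertex of $V(S_{n_2})\setminus\{x_0\}$ has the same degree in $G_n$ as in $S_{n_2}$ (namely $1$ or $2$). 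From these, any $\pi\in\mathcal{P}_h(G_n)$ lies in exactly one of three classes: (i) $V(\pi)\subseteq V(K_{n_1})$; (ii) $V(\pi)\subseteq V(S_{n_2})$; (iii) $\pi$ contains a vertex $p\in V(K_{n_1})\setminus\{x_0\}$ and a vertex $q\in V(S_{n_2})\setminus\{x_0\}$. In case (iii) the subpath of $\pi$ between $p$ and $q$ must cross from one side to the other, hence passes through $x_0$; since $x_0$ then lies strictly between $p$ and $q$ on $\pi$, it is an internal vertex of $\pi$, and it cuts $\pi$ into a $K_{n_1}$-arm of some length $a$ and an $S_{n_2}$-arm of length $h-a$ with $1\le a\le h-1$. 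This is exactly the trichotomy Type 1 / Type 2 / Type 3 of the statement.

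For Type 1 I would enumerate inside the complete graph $K_{n_1}$, where every sequence of distinct vertices is a path. If $\pi$ avoids $x_0$, it is a path in $K_{n_1}-x_0\cong K_{n_1-1}$: there are $\prod_{i=1}^{h+1}(n_1-i)$ ordered vertex sequences, and halving for the reversal convention gives $\tfrac12\prod_{i=1}^{h+1}(n_1-i)$ unordered paths, all with image $\mathrm{V}$. If $x_0$ is an end-vertex, $\pi=x_0v_1\cdots v_h$ with the $v_i$ distinct in $V(K_{n_1})\setminus\{x_0\}$, giving $\prod_{i=1}^{h}(n_1-i)$ paths with image $\mathrm{U}$, and no double counting occurs since fixing $x_0$ at an end distinguishes the two orientations. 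If $x_0$ is internal, I classify by the distance $a$ of $x_0$ to the nearer end, $1\le a\le\lfloor h/2\rfloor$: the ordered sequences with $x_0$ in position $a+1$ number $\prod_{i=1}^{a}(n_1-i)\prod_{i=1}^{h-a}(n_1-a-i)$, and for $a<h/2$ this counts each unordered path once, while for $a=h/2$ (so $h$ even) the forward and backward orientations both place $x_0$ in that position, which forces the factor $\tfrac12$. This yields $\mathrm{W}(a)$ and its count.

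For Type 2 I would use that the subgraph of $G_n$ induced on $V(S_{n_2})$ is isomorphic to $S_{n_2}$ via $x_0\leftrightarrow v_0$, so a length-$h$ path of $G_n$ with all vertices in $V(S_{n_2})$ is exactly a length-$h$ path of $S_{n_2}$; along this identification only the label at the root position changes, from $m$ to $m+n_1-1$ (and since $m>2$, the symbol $m$ occurs in the Lemma~\ref{l-1} degree sequences only at that position, so the substitution is unambiguous). Hence the possible images become $\mathrm{X}'_1,\mathrm{X}'_2,\mathrm{Y}'_1,\mathrm{Y}'_2,\mathrm{Z}'_1(a),\mathrm{Z}'_2(a),\mathrm{Z}'_3(a)$, and the fibre sizes are unchanged because they depend only on $n_2$ and the branch data $L_1,\dots,L_t$ of $S_{n_2}$, giving the asserted equalities with Lemma~\ref{l-1}. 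For Type 3 I would orient each such path with its $K_{n_1}$-arm first — the two sides are intrinsically distinguishable, so this is canonical and no reversal factor appears — and identify $\pi$ with the pair consisting of an ordered path $x_0v_1\cdots v_a$ in $K_{n_1}$ and an ordered path of length $h-a$ from $x_0$ down into $S_{n_2}$. The first is chosen in $\prod_{i=1}^{a}(n_1-i)$ ways; the second is the initial segment of length $h-a$ of exactly one branch, ending at a pendant vertex when that branch has length exactly $h-a$ (there are $L_{h-a}$ such, giving image $\mathrm{M}_2(a)$) and at a degree-$2$ vertex when the branch is longer than $h-a$ (there are $m-\sum_{i=1}^{h-a}L_i$ such, giving $\mathrm{M}_1(a)$); multiplying gives the two formulas. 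The only genuinely delicate point throughout is the ``a path and its reverse are one path'' convention, which produces the halving factors in $\mathrm{V}$ and in $\mathrm{W}(h/2)$ and must be checked to be absent in the $\mathrm{U}$-, $\mathrm{M}_1$-, and $\mathrm{M}_2$-counts; everything else is direct counting combined with Lemma~\ref{l-1}.
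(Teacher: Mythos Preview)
Your proposal is correct and matches the paper's intended approach: the paper does not give a detailed proof of this lemma, remarking only that ``using the calculation method in Lemma~\ref{l-1}, we can easily obtain the number of paths with a given degree sequence'' (with the Type~3 counting attributed to \cite{cai2024}). Your cut-vertex decomposition into the three types, the direct enumeration inside $K_{n_1}$ for Type~1, the transfer from Lemma~\ref{l-1} for Type~2, and the product count for Type~3 are exactly what the paper has in mind, and you have been appropriately careful with the reversal convention that produces the $\tfrac12$ factors in $|\Psi_h^{-1}(\mathrm{V})|$ and $|\Psi_h^{-1}(\mathrm{W}(h/2))|$.
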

	
	In the following, we will show that $h$-order invariant $^hI_f$ of a generalized starlike tree is completely determined by its branches with length $\leq h$ and the function $f:\mathbb{Z}_{+}^{h+1} \rightarrow \mathbb{R}$.

	\begin{theorem}\label{t-6}
		Given a function $f:\mathbb{Z}_{+}^{h+1} \rightarrow \mathbb{R}$. $G_{n}= K_{n_{1}}(u_{0}=v_{0})S_{n_{2}}$ is a generalized starlike tree with $n=n_2+n_1-1$ vertices and $m$ is the degree of root in $S_{n_2}$. Then the higher-order invariant $^{h}I_f$ of $G_{n}$ is completely determined by its branches whose length are not more than $h$, i.e.,
		\begin{equation*}
			\begin{aligned} ^{h}I_f(G_{n})=&\overline{\lambda^{\prime}}\left[n_1,n_2,m,h,L_{1},\cdots,L_{h-1},f(\mathrm{X'_{1}}),f(\mathrm{X'_{2}}),f(\mathrm{Y'_{1}}),f(\mathrm{Y'_{2}}),f(\mathrm{Z'_{1}(a)}),f(\mathrm{Z'_{2}(a)}),\right.\\
				&\quad\left.f(\mathrm{Z'_{3}(a)}),f(\mathrm{M_1(a)}), f(\mathrm{M_2(a)}),f(\mathrm{U}),f(\mathrm{V}),f(\mathrm{W(a)})\right]+\overline{\mu^{\prime}}L_{h},\\
where \;  \overline{\mu^{\prime}}=&f(\mathrm{X'_{1}})-f(\mathrm{X'_{2}})+f(\mathrm{Y'_{1}})-f(\mathrm{Y'_{2}}).
			\end{aligned}
		\end{equation*}
		
	\end{theorem}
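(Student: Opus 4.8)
The plan is to mimic the proof of Theorem~\ref{t-2}, now using Lemma~\ref{l-5} as the combinatorial input in place of Lemma~\ref{l-1}. Starting from
\[
{}^{h}I_f(G_n)=\sum_{(d_0,\dots,d_h)}\omega(d_0,\dots,d_h)\,f(d_0,\dots,d_h),
\]
I would first record that, by Lemma~\ref{l-5}, the degree sequences of the paths of length $h$ in $G_n$ are precisely the sequences $\mathrm{U},\mathrm{V},\mathrm{W(a)}$ (Type~1), $\mathrm{X'_1},\mathrm{X'_2},\mathrm{Y'_1},\mathrm{Y'_2},\mathrm{Z'_1(a)},\mathrm{Z'_2(a)},\mathrm{Z'_3(a)}$ (Type~2) and $\mathrm{M_1(a)},\mathrm{M_2(a)}$ (Type~3), and that the multiplicities $\omega=|\Psi_h^{-1}(\cdot)|$ are the explicit expressions listed there. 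Substituting these gives ${}^{h}I_f(G_n)$ as a finite sum of terms, each a polynomial in $n_1,n_2,m$ and the $L_i$'s times a value $f(\cdot)$.

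The second step is to locate every occurrence of $L_h$ in these multiplicities. The Type~1 multiplicities $|\Psi_h^{-1}(\mathrm{U})|,|\Psi_h^{-1}(\mathrm{V})|,|\Psi_h^{-1}(\mathrm{W(a)})|$ are polynomials in $n_1$ alone and contain no $L_i$ at all. In Type~3, $|\Psi_h^{-1}(\mathrm{M_1(a)})|$ and $|\Psi_h^{-1}(\mathrm{M_2(a)})|$ involve only $L_{h-a}$ and $\sum_{i=1}^{h-a}L_i$ with $1\le a\le h-1$, hence only $L_1,\dots,L_{h-1}$. Within Type~2, the sequences $\mathrm{Z'_1(a)}$ with $0\le a\le h-2$, and $\mathrm{Z'_2(a)},\mathrm{Z'_3(a)}$ with $a\ge 1$, have multiplicities involving $L_{a+1}$, $L_{h-a-1}$, $\sum_{i=1}^{h-(a+1)}L_i$, $\sum_{i=1}^{h-a}L_i$ and $\sum_{i=1}^{a}L_i$; a direct check of the index ranges (carried out separately for $h$ even and $h$ odd, exactly as in Theorem~\ref{t-2}) shows every index that appears is at most $h-1$. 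Consequently $L_h$ enters only through the four terms $\mathrm{X'_1},\mathrm{X'_2},\mathrm{Y'_1},\mathrm{Y'_2}$, where $|\Psi_h^{-1}(\mathrm{X'_1})|=L_h$, $|\Psi_h^{-1}(\mathrm{X'_2})|=|\Psi_h^{-1}(\mathrm{Y'_2})|=m-\sum_{i=1}^{h}L_i$, and $|\Psi_h^{-1}(\mathrm{Y'_1})|$ equals a constant in $n_1,n_2,m,h$ minus $\sum_{i=1}^{h}iL_i$ plus $(h+1)\sum_{i=1}^{h}L_i$.

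Finally I would extract the coefficient of $L_h$: it is $+f(\mathrm{X'_1})$ from $\mathrm{X'_1}$, $-f(\mathrm{X'_2})$ from $\mathrm{X'_2}$, $-f(\mathrm{Y'_2})$ from $\mathrm{Y'_2}$, and $(-h+(h+1))\,f(\mathrm{Y'_1})=+f(\mathrm{Y'_1})$ from $\mathrm{Y'_1}$, summing to $\overline{\mu'}=f(\mathrm{X'_1})-f(\mathrm{X'_2})+f(\mathrm{Y'_1})-f(\mathrm{Y'_2})$, exactly parallel to the starlike-tree case. All remaining contributions assemble into a single expression in $n_1,n_2,m,h,L_1,\dots,L_{h-1}$ and the listed values $f(\mathrm{X'_i}),f(\mathrm{Y'_i}),f(\mathrm{Z'_j(a)}),f(\mathrm{M_k(a)}),f(\mathrm{U}),f(\mathrm{V}),f(\mathrm{W(a)})$, which is $\overline{\lambda'}$, giving the claimed identity (and since $n=n_1+n_2-1$, all the arithmetic data is accounted for). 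The only real work is bookkeeping: verifying carefully, from the piecewise formulas in Lemma~\ref{l-5}, that none of the $\mathrm{Z'}$, $\mathrm{M}$, $\mathrm{U}$, $\mathrm{V}$, $\mathrm{W}$ multiplicities secretly contains an $L_h$, and the small cancellation $-h+(h+1)=1$ in the $\mathrm{Y'_1}$ coefficient; I do not expect any genuine obstacle beyond the case analysis on the parity of $h$, which affects only the ranges of $a$ and not which $L_i$ occur.
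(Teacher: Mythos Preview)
Your proposal is correct and follows essentially the same approach as the paper: both use Lemma~\ref{l-5} to list all possible degree sequences and their multiplicities, then isolate the coefficient of $L_h$ to obtain $\overline{\mu'}$. The paper carries this out by explicit computation for $h=0,1,2,3,4$ and then for general $h\ge 5$ (splitting on parity), whereas your argument is a cleaner version that checks the index ranges once and avoids most of the case-by-case expansion; the underlying idea and the key cancellation $-h+(h+1)=1$ in the $\mathrm{Y'_1}$ term are identical.
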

	
	\begin{proof}
		Let $\mathcal{P}_h$ be the set of all paths with length $h$ in $G_n$. 
		
		\begin{equation*}
			\begin{aligned}
				^{0}I_f(G_{n})=&\sum_{v\in V(G_{n})} \omega(d(v))f\left(d_{v}\right)\\=&f(m+n_{1}-1)+mf(1)+(n_{2}-m-1)f(2)+(n_{1}-1)f(n_{1}-1)\\
				=&\left[f\left(1\right)-f\left(2\right)\right]m+f\left(m+n_{1}-1\right)+(n_{1}-1)f(n_{1}-1)+(n_2-1)f(2)\\
			\end{aligned}
		\end{equation*}
		\begin{equation*}
			\begin{aligned}
				^{1}I_f(G_{n})=&\sum_{v_{0}v_{1}\in\mathcal{P}_{1}}\omega(d(v_0),d(v_1))f\left(d(v_{0}),d(v_{1})\right)\\
				=&\left|\Psi_{G_{n}}^{-1}\left(m+n_{1}-1,n_{1}-1\right)\right|f(m+n_{1}-1,n_{1}-1)+\left|\Psi_{G_{n}}^{-1}\left(n_{1}-1,n_{1}-1\right)\right|f(n_{1}-1,n_{1}-1)\\&+\left|\Psi_{G_{n}}^{-1}\left(m+n_1-1,1\right)\right|f(m+n_{1}-1,1)+\left|\Psi_{G_{n}}^{-1}\left(m+n_{1}-1,2\right)\right|f(m+n_{1}-1,2)\\&+\left|\Psi_{G_{n}}^{-1}\left(1,2\right)\right|f(1,2)+\left|\Psi_{G_{n}}^{-1}\left(2,2\right)\right|f(2,2)\\
				=&(n_{1}-1)f(m+n_{1}-1,n_{1}-1)+\frac{(n_{1}-1)(n_{1}-2)}{2}f(n_{1}-1,n_{1}-1)+L_{1}f(m+n_{1}-1,1)\\&+(m-L_{1})f(m+n_{1}-1,2)+(m-L_{1})f(1,2)+(n-n_{1}-2m+L_{1})f(2,2)\\
				=&\overline{\lambda^{\prime}}\left[m,n_{1},n_{2},f(m+n_{1}-1,1),f(m+n_{1}-1,2),f(1,2),f(2,2)\right]\\&+\left[f(m+n_{1}-1,1)-f(m+n_{1}-1,2)-f(1,2)+f(2,2)\right]L_{1}\\
			\end{aligned}
		\end{equation*}
		
		\begin{equation*}
			\begin{aligned}	
				^{2}I_f(G_{n})=&\sum_{v_{0}v_{1}v_{2}\in\mathcal{P}_{2}}\omega(d(v_0),d(v_1),d(v_2))f\left(d(v_{0}),d(v_{1}),d(v_{2})\right)\\
				=&\left|\Psi_{G_{n}}^{-1}\left(m+n_{1}-1,2,1\right)\right|f(m+n_{1}-1,2,1)+\left|\Psi_{G_{n}}^{-1}\left(m+n_{1}-1,2,2\right)\right|f(m+n_{1}-1,2,2)\\
				&+\left|\Psi_{G_{n}}^{-1}\left(2,2,2\right)\right|f(2,2,2)+\left|\Psi_{G_{n}}^{-1}\left(1,m+n_{1}-1,2\right)\right|f(1,m+n_{1}-1,2)\\
				&+\left|\Psi_{G_{n}}^{-1}\left(1,2,2\right)\right|f(1,2,2)+\left|\Psi_{G_{n}}^{-1}\left(1,m+n_{1}-1,1\right)\right|f(1,m+n_{1}-1,1)\\
				&+\left|\Psi_{G_{n}}^{-1}\left(2,m+n_{1}-1,2\right)\right|f(2,m+n_{1}-1,2)\\
				&+\left|\Psi^{-1}_{G_{n}}\left(n_{1}-1,n_{1}-1,n_{1}-1\right)\right|f(n_{1}-1,n_{1}-1,n_{1}-1)\\
				&+\left|\Psi^{-1}_{G_{n}}\left(m+n_{1}-1,n_{1}-1,n_{1}-1\right)\right|f(m+n_{1}-1,n_{1}-1,n_{1}-1)\\
				&+\left|\Psi_{G_{n}}^{-1}\left(n_{1}-1,m+n_{1}-1,1\right)\right|f(n_{1}-1,m+n_{1}-1,1)\\
				&+\left|\Psi_{G_{n}}^{-1}\left(n_{1}-1,m+n_{1}-1,2\right)\right|f(n_{1}-1,m+n_{1}-1,2)\\
				=&L_{2}f(m+n_{1}-1,2,1)+(n-n_{1}-3m+2L_{1}+L_{2})f(2,2,2)+(m-L_{1}-L_{2})f(1,2,2)\\
				&+(m-L_{1}-L_{2})f(m+n_{1}-1,2,2)+L_{1}(m-L_{1})f(1,m+n_{1}-1,2)\\
				&+\frac{L_{1}}{2}(L_{1}-1)f(1,m+n_{1}-1,1)+\frac{1}{2}(m-L_{1})(m-L_{1}-1)f(2,m+n_{1}-1,2)\\
				&+\frac{1}{2}\prod_{i=1}^{3}(n_{1}-i)f(n_{1}-1,n_{1}-1,n_{1}-1)+(n_{1}-1)(n_{1}-2)f(m+n_{1}-1,n_{1}-1,n_{1}-1)\\
				&+(n_{1}-1)L_{1}f(n_{1}-1,m+n_{1}-1,1)+(n_{1}-1)(m-L_{1})f(n_{1}-1,2,2)\\
				=&\overline{\lambda^{\prime}}[m,n_{1},n_2,L_{1},f(m+n_{1}-1,2,1),f(m+n_{1}-1,2,2),f(2,2,2),f(1,2,2),\\
				&f(1,m+n_{1}-1,2),f(1,m+n_{1}-1,1),f(2,m+n_{1}-1,2),f(n_{1}-1,n_{1}-1,n_{1}-1),\\
				&f(n_{1}-1,m+n_{1}-1,1),f(m+n_{1}-1,n_{1}-1,n_{1}-1),f(n_{1}-1,m+n_{1}-1,2)]\\
				&+\left[f(m+n_{1}-1,2,1)-f(m+n_{1}-1,2,2)+f(2,2,2)-f(1,2,2)\right]L_{2}\\
			\end{aligned}
		\end{equation*}
		
		\begin{equation*}
			\begin{aligned}	
				^{3}I_f(G_{n})=&\sum_{v_{0}v_{1}v_{2}v_{3}\in\mathcal{P}_{3}}\omega(d(v_0),d(v_1),d(v_2),d(v_3))f\left(d(v_{0}),d(v_{1}),d(v_{2},d(v_3))\right)\\
				=&\left|\Psi_{G_{n}}^{-1}\left(\mathrm{U}\right)\right|f(\mathrm{U})+\left|\Psi_{G_{n}}^{-1}\left(\mathrm{V}\right)\right|f(\mathrm{V})
				+\left|\Psi_{G_{n}}^{-1}\left(\mathrm{W(1)}\right)\right|f(\mathrm{W(1)})+^{3}I_f(S_{n_2})\\
				&+\left|\Psi^{-1}_{G_{n}}\left(\mathrm{M_1(1)}\right)\right|f(\mathrm{M_1(1)})+\left|\Psi^{-1}_{G_{n}}\left(\mathrm{M_1(2)}\right)\right|f(\mathrm{M_1(2)})
				+\left|\Psi_{G_{n}}^{-1}\left(\mathrm{M_2(1)}\right)\right|f(\mathrm{M_2(1)})\\
				&+\left|\Psi_{G_{n}}^{-1}\left(\mathrm{M_2(2)}\right)\right|f(\mathrm{M_2(2)})\\  	
				=&\prod_{i=1}^{3}(n_1-i)f(\mathrm{U})+\frac{1}{2}\prod_{i=1}^{4}(n_1-i)f(\mathrm{V})+\prod_{i=1}^{3}(n_1-i)f(\mathrm{W(1)})\\
				&+\left[f(\mathrm{X'_{1}})-f(\mathrm{X'_{2}})+f(\mathrm{Y'_{1}})-f(\mathrm{Y'_{2}})\right]L_{3}+\left[2f(\mathrm{Y'_{1}})-f(\mathrm{X'_{2}})-f(\mathrm{Y'_{2}})\right]L_{2}\\
				&+\left[3f(\mathrm{Y'_{1}})-f(\mathrm{X'_{2}})-f(\mathrm{Y'_{2}})\right]L_{1}+mf(\mathrm{X'_{2}})+(n_2-1-4m)f(\mathrm{Y'_{1}})+mf(\mathrm{Y'_{2}})\\
				&+L_{1}(m-L_{1}-L_{2})f(\mathrm{Z'_{1}(0)})+L_{2}(m-1-L_{1})f(\mathrm{Z'_{1}(1)})+L_{1}L_{2}f(\mathrm{Z'_{2}(0)})\\
				&+(m-L_{1}-L_{2})(m-L_{1}-1)f(\mathrm{Z'_{3}(1)})+\left(n_{1}-1\right)\left(m-L_{1}-L_{2}\right)f(\mathrm{M_1(1)})\\
				&+\left(n_{1}-1\right)\left(n_{1}-2\right)\left(m-L_{1}\right)f(\mathrm{M_1(2)})\\&+\left(n_{1}-1\right)L_{2}f(\mathrm{M_2(1)})+\left(n_{1}-1\right)\left(n_{1}-2\right)L_{1}f(\mathrm{M_2(2)})\\
				=&\left[f(\mathrm{X'_{1}})-f(\mathrm{X'_{2}})+f(\mathrm{Y'_{1}})-f(\mathrm{Y'_{2}})\right]L_{3}\\&+\left[2f(\mathrm{Y'_{1}})-f(\mathrm{X'_{2}})-f(\mathrm{Y'_{2}})+(n_1-1)\left(f(\mathrm{M_2(1)})-f(\mathrm{M_1(1)})\right)\right]L_{2}\\&+\left[3f(\mathrm{Y'_{1}})-f(\mathrm{X'_{2}})-f(\mathrm{Y'_{2}})+\left(n_{1}-1\right)\left(n_{1}-2\right)\left(f(\mathrm{M_2(2)})-f(\mathrm{M_1(2)})\right)\right.\\
				&\quad\left.-\left(n_{1}-1\right)f(\mathrm{M_1(1)})\right]L_{1}+mf(\mathrm{X'_{2}})+(n_2-1-4m)f(\mathrm{Y'_{1}})+mf(\mathrm{Y'_{2}})\\&+L_{1}(m-L_{1}-L_{2})f(\mathrm{Z'_{1}(0)})+L_{2}(m-1-L_{1})f(\mathrm{Z'_{1}(1)})+L_{1}L_{2}f(\mathrm{Z'_{2}(0)})\\&+(m-L_{1}-L_{2})(m-L_{1}-1)f(\mathrm{Z'_{3}(1)})+m\left(n_{1}-1\right)f(\mathrm{M_1(1)})\\&+m\left(n_{1}-1\right)\left(n_{1}-2\right)f(\mathrm{M_1(2)})\\
				=&\overline{\lambda^{\prime}}\left[n_1,n_2,m,L_{1},L_2,f(\mathrm{X'_{1}}),f(\mathrm{X'_{2}}),f(\mathrm{Y'_{1}}),f(\mathrm{Y'_{2}}),f(\mathrm{Z'_{1}(0)}),f(\mathrm{Z'_{1}(1)}),f(\mathrm{Z'_{2}(0)}),f(\mathrm{Z'_{3}(1)})\right.\\
				&\quad\left. f(\mathrm{M_1(1)}),f(\mathrm{M_1(2)}),f(\mathrm{U}),f(\mathrm{V}),f(\mathrm{W(1)})\right]+\left[f(\mathrm{X'_{1}})-f(\mathrm{X'_{2}})+f(\mathrm{Y'_{1}})-f(\mathrm{Y'_{2}})\right]L_{3}\\
			\end{aligned}
		\end{equation*}
		
		\begin{equation*}
			\begin{aligned}	
				^{4}I_f(G_{n})=&\sum_{v_{0}v_{1}v_{2}v_{3}v_{4}\in\mathcal{P}_{4}}\omega(d(v_0),d(v_1),d(v_2),d(v_3),d(v_4))f\left(d(v_{0}),d(v_{1}),d(v_{2},d(v_3),d(v_4))\right)\\
				=&\left|\Psi_{G_{n}}^{-1}\left(\mathrm{U}\right)\right|f(\mathrm{U})+\left|\Psi_{G_{n}}^{-1}\left(\mathrm{V}\right)\right|f(\mathrm{V})+\left|\Psi_{G_{n}}^{-1}\left(\mathrm{W(1)}\right)\right|f(\mathrm{W(1)})+\left|\Psi_{G_{n}}^{-1}\left(\mathrm{W(2)}\right)\right|f(\mathrm{W(2)})\\&+^{4}I_f(S_{n_2})+\left|\Psi^{-1}_{G_{n}}\left(\mathrm{M_1(1)}\right)\right|f(\mathrm{M_1(1)})+\left|\Psi^{-1}_{G_{n}}\left(\mathrm{M_1(2)}\right)\right|f(\mathrm{M_1(2)})+\left|\Psi^{-1}_{G_{n}}\left(\mathrm{M_1(3)}\right)\right|f(\mathrm{M_1(3)})\\&+\left|\Psi_{G_{n}}^{-1}\left(\mathrm{M_2(1)}\right)\right|f(\mathrm{M_2(1)})+\left|\Psi_{G_{n}}^{-1}\left(\mathrm{M_2(2)}\right)\right|f(\mathrm{M_2(2)})+\left|\Psi_{G_{n}}^{-1}\left(\mathrm{M_2(3)}\right)\right|f(\mathrm{M_2(3)})\\ 	
				=&\prod_{i=1}^{4}(n_1-i)f(\mathrm{U})+\frac{1}{2}\prod_{i=1}^{5}(n_1-i)f(\mathrm{V})+\prod_{i=1}^{4}(n_1-i)f(\mathrm{W(1)})+\frac{1}{2}\prod_{i=1}^{4}(n_1-i)f(\mathrm{W(2)})\\&+\left[f(\mathrm{X'_{1}})-f(\mathrm{X'_{2}})+f(\mathrm{Y'_{1}})-f(\mathrm{Y'_{2}})\right]L_{4}+\left[2f(\mathrm{Y'_{1}})-f(\mathrm{X'_{2}})-f(\mathrm{Y'_{2}})\right]L_{3}\\&+\left[3f(\mathrm{Y'_{1}})-f(\mathrm{X'_{2}})-f(\mathrm{Y'_{2}})\right]L_{2}+(m-L_{1})f(\mathrm{X'_{2}})+(n_2-1-5m+4L_{1})f(\mathrm{Y'_{1}})\\&+(m-L_{1})f(\mathrm{Y'_{2}})+L_{1}\left(m-\sum_{i=1}^{3}L_{i}\right)f(\mathrm{Z'_{1}(0)})+L_{2}\left(m-\sum_{i=1}^{2}L_{i}\right)f(\mathrm{Z'_{1}(1)})\\&+L_{3}\left(m-1-L_{1}\right)f(\mathrm{Z'_{1}(2)})+L_{1}L_{3}f(\mathrm{Z'_{2}(0)})+\frac{1}{2}L_{2}(L_{2}-1)f(\mathrm{Z'_{2}(1)})\\&+\left(m-\sum_{i=1}^{3}L_{i}\right)(m-L_{1}-1)f(\mathrm{Z'_{3}(1)})+\frac{1}{2}(m-L_{1}-L_{2})(m-1-L_{1}-L_{2})f(\mathrm{Z'_{3}(2)})\\&+\left(n_{1}-1\right)\left(m-L_{1}-L_{2}-L_{3}\right)f(\mathrm{M_1(1)})+\left(n_{1}-1\right)\left(n_{1}-2\right)\left(m-L_{1}-L_{2}\right)f(\mathrm{M_1(2)})\\&+\prod_{i=1}^{3}(n_1-i)\left(m-L_{1}\right)f(\mathrm{M_1(3)})+(n_1-1)L_{3}f(\mathrm{M_2(1)})\\&+\left(n_{1}-1\right)\left(n_{1}-2\right)L_{2}f(\mathrm{M_2(2)})+\prod_{i=1}^{3}(n_1-i)L_{1}f(\mathrm{M_2(3)})\\
				=&\prod_{i=1}^{4}(n_1-i)f(\mathrm{U})+\frac{1}{2}\prod_{i=1}^{5}(n_1-i)f(\mathrm{V})+\prod_{i=1}^{4}(n_1-i)f(\mathrm{W(1)})+\frac{1}{2}\prod_{i=1}^{4}(n_1-i)f(\mathrm{W(2)})\\
&+\left[f(\mathrm{X'_{1}})-f(\mathrm{X'_{2}})+f(\mathrm{Y'_{1}})-f(\mathrm{Y'_{2}})\right]L_{4}+\left[2f(\mathrm{Y'_{1}})-f(\mathrm{X'_{2}})-f(\mathrm{Y'_{2}})\right.\\
&\quad\left.+L_1\left(f(\mathrm{M_3(0)})-f(\mathrm{M_1(0)})\right)+(m-1-L_1)\left(f(\mathrm{M_1(2)})-f(\mathrm{M_3(1)})\right)\right.\\
&\quad\left.+(n_1-1)\left(f(\mathrm{M_2(1)})-f(\mathrm{M_1(1)})\right)\right]L_{3}
+\left[3f(\mathrm{Y'_{1}})-f(\mathrm{X'_{2}})-f(\mathrm{Y'_{2}})\right.\\
&\quad\left.+\left(n_{1}-1\right)\left(n_{1}-2\right)\left(f(\mathrm{M_2(2)})-f(\mathrm{M_1(2)})\right)-L_1\left(f(\mathrm{Z_1(0)})+f(\mathrm{Z_1(1)})\right)\right.\\
				&\quad\left.-(m-L_1-1)f(\mathrm{Z_3(1)})-\left(n_{1}-1\right)f(\mathrm{M_1(1)})\right]L_{2}+L_{2}(m-L_1-L_2)f(\mathrm{Z_{1}(1)})\\
&+\frac{1}{2}L_2(L_2-1)f(\mathrm{Z_{2}(1)})+\frac{1}{2}(m-L_1-L_2)(m-L_1-L_2-1)f(\mathrm{Z_{3}(2)})\\
&+\left[f(\mathrm{X'_{2}})+4f(\mathrm{Y'_{1}})-f(\mathrm{Y'_{2}})+mf(\mathrm{Z_{1}(0)})-(n_1-1)f(\mathrm{M_{1}(1)})-(n_1-1)(n_1-2)f(\mathrm{M_{2}(2)})\right.\\
&\quad\left.+(n_1-1)(n_1-2)(n_1-3)\left(f(\mathrm{M_2(3)})-f(\mathrm{M_1(3)})\right)\right]L_{1}-L^2_1f(\mathrm{Z_1(0)})\\
&-(m-L_1)(m-L_1-1)f(\mathrm{M_3(1)})+mf(\mathrm{X'_{2}})+(n_2-1-5m)f(\mathrm{Y'_{1}})+mf(\mathrm{Y'_{2}})\\
&+m(n_1-1)f(\mathrm{M_1(1)})+m(n_1-1)(n_1-2)f(\mathrm{M_1(2)})+(n_1-1)(n_1-2)(n_1-3)f(\mathrm{M_1(3)})\\
				=&\overline{\lambda^{\prime}}\left[n_1,n_2,m,L_{1},L_2,f(\mathrm{X'_{1}}),f(\mathrm{X'_{2}}),f(\mathrm{Y'_{1}}),f(\mathrm{Y'_{2}}),f(\mathrm{Z'_{1}(0)}),f(\mathrm{Z'_{1}(1)}),f(\mathrm{Z'_{2}(0)}),f(\mathrm{Z'_{2}(1)}),\right.\\
				&\quad\left.f(\mathrm{Z'_{3}(1)}),f(\mathrm{Z'_{3}(2)}),f(\mathrm{M_1(1)}), f(\mathrm{M_1(2)}),f(\mathrm{M_1(3)}),f(\mathrm{M_2(1)}),f(\mathrm{M_2(2)}),f(\mathrm{M_2(3)}),f(\mathrm{U}),\right.\\
				&\quad\left.f(\mathrm{V}),f(\mathrm{W(1)}),f(\mathrm{W(2)})\right]+\left[f(\mathrm{X'_{1}})-f(\mathrm{X'_{2}})+f(\mathrm{Y'_{1}})-f(\mathrm{Y'_{2}})\right]L_{4}\\
			\end{aligned}
		\end{equation*}
		
		If $h\geqslant5$ is odd, then
		\begin{align*}
			^{h}I_f(G_{n})=&\sum_{v_{0}v_{1}\cdots v_{h}\in\mathcal{P}_{h}}\omega(d(v_0),d(v_1),\cdots,d(v_h))f\left(d(v_0),d(v_1),\cdots,d(v_h)\right)\\
			=&\left|\Psi_{G_{n}}^{-1}\left(\mathrm{U}\right)\right|f(\mathrm{U})+\left|\Psi_{G_{n}}^{-1}\left(\mathrm{V}\right)\right|f(\mathrm{V})+\sum_{a=1}^{\frac{h-1}{2}}\left|\Psi_{G_{n}}^{-1}\left(\mathrm{W(a)}\right)\right|f\left(\mathrm{W(a)}\right)+^{h}I_f(S_{n_2})\\
		&+\sum_{a=1}^{h-1}\left|\Psi_{G_{n}}^{-1}\left(\mathrm{M_1(a)}\right)\right|f\left(\mathrm{M_1(a)}\right)
			+\sum_{a=1}^{h-1}\left|\Psi_{G_{n}}^{-1}\left(\mathrm{M_2(a)}\right)\right|f\left(\mathrm{M_2(a)}\right)\\
			=&\prod_{i=1}^{h}(n_1-i)f(\mathrm{U})+\frac{1}{2}\prod_{i=1}^{h+1}(n_1-i)f(\mathrm{V})
			+\sum_{a=1}^{\frac{h-1}{2}}\left[\prod\limits_{i=1}^{a}(n_1-i)\prod\limits_{i=1}^{h-a}(n_1-a-i)\right]f\left(\mathrm{W(a)}\right)\\
			&+^{h}I_f(S_{n_2})+\sum_{a=1}^{h-1}\left[\prod_{i=1}^{a}\left(n_{1}-i\right)L_{h-a}\right]f\left(\mathrm{M_2(a)}\right)\\
			&+\sum_{a=1}^{h-1}\left[\prod_{i=1}^{a}\left(n_{1}-i\right)\left(m-\sum_{i=1}^{h-a}L_{i}\right)\right]f\left(\mathrm{M_1(a)}\right)\\
			=&\lambda\left(n_1,n_2,m,h,L_{1},\cdots,L_{h-3}\right)+\left[f(\mathrm{X'_{1}})-f(\mathrm{X'_{2}})+f(\mathrm{Y'_{1}})-f(\mathrm{Y'_{2}})\right]L_{h}\\&+\left[2f(\mathrm{Y'_{1}})-f(\mathrm{X'_{2}})-f(\mathrm{Y'_{2}})+(n_1-1)\left(f(\mathrm{M_2(1)})-f(\mathrm{M_1(1)})\right)\right]L_{h-1}\\&+\left[3f(\mathrm{Y'_{1}})-f(\mathrm{X'_{2}})-f(\mathrm{Y'_{2}})+\left(n_{1}-1\right)\left(n_{1}-2\right)\left(f(\mathrm{M_2(2)})-f(\mathrm{M_1(2)})\right)\right.\\
			&\quad\left.-\left(n_{1}-1\right)f(\mathrm{M_1(1)})\right]L_{h-2}\\
			=&\lambda^{\prime}\left[n_1,n_2,m,h,L_{1},\cdots,L_{h-3},f(\mathrm{X'_{1}}),f(\mathrm{X'_{2}}),f(\mathrm{Y'_{1}}),f(\mathrm{Y'_{2}}),f(\mathrm{Z'_{1}(a)}),f(\mathrm{Z'_{2}(a)}),f(\mathrm{Z'_{3}(a)}),\right.\\
			&\quad\left.f(\mathrm{M_1(a)}), f(\mathrm{M_2(a)}),f(\mathrm{U}),f(\mathrm{V}),f(\mathrm{W(a)})\right] +\left[f(\mathrm{X'_{1}})-f(\mathrm{X'_{2}})+f(\mathrm{Y'_{1}})-f(\mathrm{Y'_{2}})\right]L_{h}\\&+\left[2f(\mathrm{Y'_{1}})-f(\mathrm{X'_{2}})-f(\mathrm{Y'_{1}})+L_1\left(f(\mathrm{Z'_2(0)})-f(\mathrm{Z'_1(0)})+f(\mathrm{Z'_3(1)})-f(\mathrm{Z'_1(h-2)})\right)\right.\\
			&\quad\left.+(m-1)\left(f(\mathrm{Z'_1(h-2)})-f(\mathrm{Z'_3(1)})\right)+(n_1-1)\left(f(\mathrm{M_2(1)})-f(\mathrm{M_1(1)})\right)\right]L_{h-1}\\&+\left[3f(\mathrm{Y'_{1}})-f(\mathrm{X'_{2}})-f(\mathrm{Y'_{2}})+L_1\left(f(\mathrm{Z'_3(1)})-f(\mathrm{Z'_1(0)})+f(\mathrm{Z'_3(2)})-f(\mathrm{Z'_1(h-3)})\right)\right.\\
			&\quad\left.+L_2\left(f(\mathrm{Z'_2(1)})-f(\mathrm{Z'_1(1)})+f(\mathrm{Z'_3(2)})-f(\mathrm{Z'_1(h-3)})\right)\right.\\
			&\quad\left.+(m-1)\left(f(\mathrm{Z'_1(h-3)})-f(\mathrm{Z'_3(1)})-f(\mathrm{Z'_3(2)})\right)\right.\\
			&\quad\left.+(n_1-1)(n_1-2)\left(f(\mathrm{M_2(2)})-f(\mathrm{M_1(2)})\right)-(n_1-1)f(\mathrm{M_1(1)})\right]L_{h-2}\\
			=&\overline{\lambda^{\prime}}\left[n_1,n_2,m,h,L_{1},\cdots,L_{h-1},f(\mathrm{X'_{1}}),f(\mathrm{X'_{2}}),f(\mathrm{Y'_{1}}),f(\mathrm{Y'_{2}}),f(\mathrm{Z'_{1}(a)}),f(\mathrm{Z'_{2}(a)}),\right.\\
			&\quad\left.f(\mathrm{Z'_{3}(a)}),f(\mathrm{M_1(a)}), f(\mathrm{M_2(a)}),f(\mathrm{U}),f(\mathrm{V}),f(\mathrm{W(a)})\right]+\overline{\mu^{\prime}}L_{h}
		\end{align*}
		
		If $h\geqslant6$ is even, then
		\begin{equation*}		
			\begin{aligned}
				^{h}I_f(G_{n})=&\sum_{v_{0}v_{1}\cdots v_{h}\in\mathcal{P}_{h}}\omega(d(v_0),d(v_1),\cdots,d(v_h))f\left(d(v_0),d(v_1),\cdots,d(v_h)\right)\\
				=&\left|\Psi_{G_{n}}^{-1}\left(\mathrm{U}\right)\right|f(\mathrm{U})+\left|\Psi_{G_{n}}^{-1}\left(\mathrm{V}\right)\right|f(\mathrm{V})
				+\sum_{a=1}^{\frac{h-1}{2}}\left|\Psi_{G_{n}}^{-1}\left(\mathrm{W(a)}\right)\right|f\left(\mathrm{W(a)}\right)+^{h}I_f(S_{n_2})\\
				&+\sum_{a=1}^{h-1}\left|\Psi_{G_{n}}^{-1}\left(\mathrm{M_1(a)}\right)\right|f\left(\mathrm{M_1(a)}\right)
				+\sum_{a=1}^{h-1}\left|\Psi_{G_{n}}^{-1}\left(\mathrm{M_2(a)}\right)\right|f\left(\mathrm{M_2(a)}\right)\\
				=&\prod_{i=1}^{h}(n_1-i)f(\mathrm{U})+\frac{1}{2}\prod_{i=1}^{h+1}(n_1-i)f(\mathrm{V})
				+\sum_{a=1}^{\frac{h-3}{2}}\left[\prod\limits_{i=1}^{a}(n_1-i)\prod\limits_{i=1}^{h-a}(n_1-a-i)\right]f\left(\mathrm{W(a)}\right)\\
				&+\dfrac{h}{2}\prod\limits_{i=1}^{\frac{h}{2}}(n_1-i)\prod\limits_{i=1}^{\frac{h}{2}}(n_1-\frac{h}{2}-i)f(\mathrm{W(\frac{h}{2})})
				+\lambda\left(n_1,n_2,m,L_{1},\cdots,L_{h-1}\right) \\
				&+\left[f(\mathrm{X'_1})-f(\mathrm{X'_2})+f(\mathrm{Y'_1})-f(\mathrm{Y'_2})\right]L_{h}+\left[2f(\mathrm{Y'_1})-f(\mathrm{X'_2})-f(\mathrm{Y'_2})\right]L_{h-1}\\
				&+\left[3f(\mathrm{Y'_1})-f(\mathrm{X'_2})-f(\mathrm{Y'_2})+1-m\right]L_{h-2}+\sum_{a=1}^{h-1}\left[\prod_{i=1}^{h}\left(n_{1}-i\right)L_{h-k}\right]f\left(\mathrm{M_1(a)}\right)\\
				&+\sum_{a=1}^{h-1}\left[\prod_{i=1}^{h}\left(n_{1}-i\right)\left(m-\sum_{i=1}^{h-k}L_{i}\right)\right]f\left(\mathrm{M_2(a)}\right)\\
				=&\lambda^{\prime}\left[n_1,n_2,m,h,L_{1},\cdots,L_{h-3},f(\mathrm{X'_{1}}),f(\mathrm{X'_{2}}),f(\mathrm{Y'_{1}}),f(\mathrm{Y'_{2}}),f(\mathrm{Z'_{1}(a)}),f(\mathrm{Z'_{2}(a)}),f(\mathrm{Z'_{3}(a)}),\right.\\
				&\quad\left.f(\mathrm{M_1(a)}), f(\mathrm{M_2(a)}),f(\mathrm{U}),f(\mathrm{V}),f(\mathrm{W(a)})\right] +\left[f(\mathrm{X'_{1}})-f(\mathrm{X'_{2}})+f(\mathrm{Y'_{1}})-f(\mathrm{Y'_{2}})\right]L_{h}\\&+\left[2f(\mathrm{Y'_{1}})-f(\mathrm{X'_{2}})-f(\mathrm{Y'_{1}})+L_1\left(f(\mathrm{Z'_2(0)})-f(\mathrm{Z'_1(0)})+f(\mathrm{Z'_3(1)})-f(\mathrm{Z'_1(h-2)})\right)\right.\\
				&\quad\left.+(m-1)\left(f(\mathrm{Z'_1(h-2)})-f(\mathrm{Z'_3(1)})\right)+(n_1-1)\left(f(\mathrm{M_2(1)})-f(\mathrm{M_1(1)})\right)\right]L_{h-1}\\&+\left[3f(\mathrm{Y'_{1}})-f(\mathrm{X'_{2}})-f(\mathrm{Y'_{2}})+L_1\left(f(\mathrm{Z'_3(1)})-f(\mathrm{Z'_1(0)})+f(\mathrm{Z'_3(2)})-f(\mathrm{Z'_1(h-3)})\right)\right.\\
				&\quad\left.+L_2\left(f(\mathrm{Z'_2(1)})-f(\mathrm{Z'_1(1)})+f(\mathrm{Z'_3(2)})-f(\mathrm{Z'_1(h-3)})\right)\right.\\
				&\quad\left.+(m-1)\left(f(\mathrm{Z'_1(h-3)})-f(\mathrm{Z'_3(1)})-f(\mathrm{Z'_3(2)})\right)\right.\\
				&\quad\left.+(n_1-1)(n_1-2)\left(f(\mathrm{M_2(2)})-f(\mathrm{M_1(2)})\right)-(n_1-1)f(\mathrm{M_1(1)})\right]L_{h-2}\\
				=&\overline{\lambda^{\prime}}\left[n_1,n_2,m,h,L_{1},\cdots,L_{h-1},f(\mathrm{X'_{1}}),f(\mathrm{X'_{2}}),f(\mathrm{Y'_{1}}),f(\mathrm{Y'_{2}}),f(\mathrm{Z'_{1}(a)}),f(\mathrm{Z'_{2}(a)}),\right.\\
				&\quad\left.f(\mathrm{Z'_{3}(a)}),f(\mathrm{M_1(a)}), f(\mathrm{M_2(a)}),f(\mathrm{U}),f(\mathrm{V}),f(\mathrm{W(a)})\right]+\overline{\mu^{\prime}}L_{h}\\
			\text{where } \overline{\mu^{\prime}}=&f(\mathrm{X'_{1}})-f(\mathrm{X'_{2}})+f(\mathrm{Y'_{1}})-f(\mathrm{Y'_{2}}).
			\end{aligned}
		\end{equation*}
	\end{proof}

	\section{The conditions on $f$ determining a graph by $^{h}I_f$}
	
	In this section, we will give conditions on $f: \bigcup_{i=1}^{\infty} \mathbb{Z}_{+}^{i} \rightarrow \mathbb{R}$ for which the corresponding higher-order invariant $^{h}I_f$ can determine a graph in some graph families.
	
	First, we consider the starlike trees. Let $\mathcal{G}_{n}$ be the set of all starlike trees with $n$ vertices. $S_n, S'_n\in\mathcal{G}_{n}$ are two starlike trees whose the maximum degree is $m$ and $m'$, respectively. For which function $f: \bigcup_{i=1}^{\infty} \mathbb{Z}_{+}^{i} \rightarrow \mathbb{R}$, can the corresponding higher-order invariant $^{h}I_f$ distinguish the graphs in $\mathcal{G}_{n}$, i.e., $S_n\cong S'_n$ if and only if $^{h}I_f(S_{n})=^{h}I_f(S^{\prime}_{n})$ for all $0\leqslant h\leqslant \rho$, where $S_n, S'_n\in\mathcal{G}_{n}$ ?
	
	The following result provides a sufficient condition on $f$ determining a graph in $\mathcal{G}_{n}$ by $^{h}I_f$.
	
	\begin{theorem}\label{t-7}
		If $f:\bigcup_{i=1}^{\infty} \mathbb{Z_{+}}^{i} \rightarrow \mathbb{R}$ with $f(x_1,x_2,\cdots,x_i)=f(x_i,x_{i-1},\cdots,x_1)$ for $i=1,2,\cdots$, and
		
		$(a)$ $\frac{f(x)-f(y)}{x-y}\neq f(2)-f(1)$ for $x\neq y$ and $x,y\geqslant3$;
		
		$(b)$ $g_{t}(x)\neq g_{t}(2)$ for $x\neq 2$ and $t=0,1,\cdots$, where $g_{t}(x)=f(x,\underbrace{2,\cdots,2}_{t},1)-f(x,\underbrace{2,\cdots,2}_{t+1})$.\\
		Then the corresponding higher-order invariant $^{h}I_f$ can distinguish the graphs in $\mathcal{G}_{n}$, i.e., $S_n\cong S'_n$ if and only if $^{h}I_f(S_{n})=^{h}I_f(S^{\prime}_{n})$ for all $h\geq 0$, where $S_n, S'_n\in\mathcal{G}_{n}$.
	\end{theorem}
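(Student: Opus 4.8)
The ``only if'' direction is immediate since $^{h}I_f$ is an isomorphism invariant, so I concentrate on the converse. Suppose $S_n,S'_n\in\mathcal{G}_n$ satisfy $^{h}I_f(S_n)={}^{h}I_f(S'_n)$ for every $h\geq0$; write $m,m'\geq3$ for their root degrees and $L_i,L'_i$ for the numbers of branches of length $i$. The plan is to first identify $m$ with $m'$ from the case $h=0$, and then recover $L_1,L_2,\dots$ inductively from the structure formula of Theorem~\ref{t-2}.

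\emph{Step 1 (root degree).} From the proof of Theorem~\ref{t-2}, $^{0}I_f(S_n)=f(m)+(n-1)f(2)+\bigl(f(1)-f(2)\bigr)m$. Equating this with the corresponding expression for $S'_n$ gives $f(m)-f(m')=(m'-m)\bigl(f(1)-f(2)\bigr)$. If $m\neq m'$, then, since $m,m'\geq3$, we may divide by $m-m'$ to obtain $\frac{f(m)-f(m')}{m-m'}=f(2)-f(1)$, contradicting hypothesis~(a). Hence $m=m'$; call the common value $m$.

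\emph{Step 2 (branches, by induction on $h$).} By Theorem~\ref{t-2}, for each $h\geq1$ we can write $^{h}I_f(S_n)=\overline{\lambda}_h+\overline{\mu}_h L_h$, where $\overline{\lambda}_h$ is a real number depending only on $n,m,L_1,\dots,L_{h-1}$ (and on fixed values of $f$), and $\overline{\mu}_h=f(X_1)-f(X_2)+f(Y_1)-f(Y_2)$ with $X_1=(m,\underbrace{2,\cdots,2}_{h-1},1)$, $X_2=(m,\underbrace{2,\cdots,2}_{h})$, $Y_1=(\underbrace{2,\cdots,2}_{h+1})$, $Y_2=(1,\underbrace{2,\cdots,2}_{h})$. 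Using the symmetry $f(x_0,\dots,x_h)=f(x_h,\dots,x_0)$ to rewrite $f(Y_2)=f(2,\underbrace{2,\cdots,2}_{h-1},1)$ and $f(Y_1)=f(2,\underbrace{2,\cdots,2}_{h})$, one gets
\[
\overline{\mu}_h=\Bigl(f(m,\underbrace{2,\cdots,2}_{h-1},1)-f(m,\underbrace{2,\cdots,2}_{h})\Bigr)-\Bigl(f(2,\underbrace{2,\cdots,2}_{h-1},1)-f(2,\underbrace{2,\cdots,2}_{h})\Bigr)=g_{h-1}(m)-g_{h-1}(2),
\]
which is nonzero by hypothesis~(b), since $m\neq2$. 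Now I argue by induction. For $h=1$, $\overline{\lambda}_1$ depends only on $n$ and $m$, which agree for $S_n$ and $S'_n$, so $\overline{\mu}_1(L_1-L'_1)=0$ forces $L_1=L'_1$. Assuming $L_i=L'_i$ for $1\leq i\leq h-1$, the quantities $\overline{\lambda}_h$ and $\overline{\mu}_h$ coincide for both trees, whence $\overline{\mu}_h(L_h-L'_h)=0$ and $L_h=L'_h$. Since $\sum_i iL_i=n-1$, the multiplicities vanish for $h\geq n$, so we conclude $L_h=L'_h$ for all $h\geq1$; thus $S_n$ and $S'_n$ have identical branch structure and are isomorphic.

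\emph{Expected main obstacle.} The only nonroutine point is the reduction of $\overline{\mu}_h$ to $g_{h-1}(m)-g_{h-1}(2)$: one must use the symmetry of $f$ to rewrite $f(Y_1),f(Y_2)$ so that the tail pattern matches that of $g_{h-1}(2)$ and the two $g_{h-1}$ terms appear with the correct signs; once this identity is in place, hypothesis~(b) applies verbatim and hypothesis~(a) handles $h=0$. Everything else is bookkeeping on top of Theorem~\ref{t-2}, whose statement already guarantees that the remainder $\overline{\lambda}_h$ involves only $n,m$ and the lower multiplicities $L_1,\dots,L_{h-1}$, which is exactly what makes the induction close.
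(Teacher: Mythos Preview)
Your proof is correct and follows essentially the same route as the paper: use $h=0$ together with condition~(a) to force $m=m'$, then invoke Theorem~\ref{t-2} to write $^{h}I_f(S_n)=\overline{\lambda}_h+\overline{\mu}_hL_h$ and recover the $L_h$ inductively after checking that $\overline{\mu}_h=g_{h-1}(m)-g_{h-1}(2)\neq0$ by condition~(b). Your write-up is in fact a bit more careful than the paper's (you spell out the use of the symmetry of $f$ in the $\overline{\mu}_h$ identity and note that the induction terminates), but the argument is the same.
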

	
	\begin{proof}
		If $S_{n}\cong S^{\prime}_{n}$, then it is clearly that $^{h}I_f(S_{n})=^{h}I_f(S^{\prime}_{n})$.
		
		On the other hand, let $^{h}I_f(S_{n})=^{h}I_f(S^{\prime}_{n})$ for all $h\geqslant 0$, then we know from the proof of Theorem \ref{t-2},
		$$^{0}I_f(S_{n})=\sum_{v\in V(S_{n})}\omega(d(v))f(d(v))=f(m)+mf(1)+(n-m-1)f(2).$$
		And
		$$f(m)+m(f(1)-f(2))+(n-1)f(2)=f(m')+m'(f(1)-f(2))+(n-1)f(2)$$
		i.e.,
		$$f(m)-f(m')=(m-m')(f(2)-f(1))$$
		by $^{0}I_f(S_{n})=^{0}I_f(S^{\prime}_{n})$. We can get $m=m'$ since $\frac{f(x)-f(y)}{x-y}\neq f(2)-f(1)$ for $x\neq y$.
		
		Again,
		$$^{1}I_f(S_{n})=\overline{\lambda}(m,n)+\left[f(m,1)-f(m,2)+f(2,2)-f(1,2)\right]L_{1}(S_{n})$$
		from the proof of Theorem \ref{t-2}. Using $^{1}I_f(S_{n})=^{1}I_f(S^{\prime}_{n})$, we have
		\begin{equation*}
			\begin{aligned}
				&\overline{\lambda}(m,n)+\left[f(m,1)-f(m,2)+f(2,2)-f(1,2)\right]L_{1}(S_{n})\\
				=&\overline{\lambda}(m',n)+\left[f(m',1)-f(m',2)+f(2,2)-f(1,2)\right]L_{1}(S'_{n}).
			\end{aligned}
		\end{equation*}
		And $L_{1}(S_{n})=L_{1}(S'_{n})$ since $m=m'>2$ and $g_{0}(x)=f(x,1)-f(x,2)\neq g_{0}(2)$ for $x\neq 2$.
		
		From Theorem \ref{t-2}, $^{h}I_f(S_{n})=\overline{\lambda}+\overline{\mu}L_{h}(S_{n})$, where
		$\overline{\lambda}=\overline{\lambda}[n,m,L_{1},\cdots,L_{h-1},f(X_{1}),f(X_{2})$, $f(Y_{1}),f(Y_{2}),f(Z_1(a)),f(Z_2(a)),f(Z_3(a))]$ and
		\begin{align*}
			\overline{\mu}&=f(X_{1})-f(X_{2})+f(Y_{1})-f(Y_{2})\\
			&=f(m,\underbrace{2,\cdots,2}_{h-1},1)-f(m,\underbrace{2,\cdots,2}_{h})+f(\underbrace{2,\cdots,2}_{h+1})-f(1,\underbrace{2,\cdots,2}_{h})\\
			&=g_{h-1}(m)-g_{h-1}(2)\neq 0
		\end{align*}
		Let $L_{1}(S_{n})=L_{1}(S'_{n}),L_{2}(S_{n})=L_{2}(S'_{n}),L_{h-1}(S_{n})=L_{h-1}(S'_{n})$. By $^{h}I_f(S_{n})=^{h}I_f(S^{\prime}_{n})$, we can get $L_{h}(S_{n})=L_{h}(S'_{n})$. 	
		
		Hence $S_{n}$ and $S_{n}^{\prime}$ are isomorphic.
	\end{proof}
	
	Next, we consider the generalized starlike trees. Let $\mathcal{G}_{n,r}$ be the set of all generalized starlike trees with $n$ vertices and  maximum degree $r$.
	
	\begin{theorem}\label{t-8}
		If $f:\bigcup_{i=1}^{\infty} \mathbb{Z_{+}}^{i} \rightarrow \mathbb{R}$ with $f(x_1,x_2,\cdots,x_i)=f(x_i,x_{i-1},\cdots,x_1)$ for $i=1,2,\cdots$, and
		
		$(a)$ $\frac{xf(x)-yf(y)}{x-y}\neq f(1)$ for $x\neq y$ and $x,y \geqslant3$;
		
		$(b)$ $g_{t}(x)\neq g_{t}(2)$ for $x\neq 2$ and $t=0,1,\cdots$, where $g_{t}(x)=f(x,\underbrace{2,\cdots,2}_{t},1)-f(x,\underbrace{2,\cdots,2}_{t+1})$.\\
		Then the corresponding higher-order invariant $^{h}I_f$ can distinguish the graphs in $\mathcal{G}'_{n,r}$, i.e., $G_n\cong G'_n$ if and only if $^{h}I_f(G_{n})=^{h}I_f(G^{\prime}_{n})$ for all $h\geqslant 0$, where $G_n, G'_n\in\mathcal{G}'_{n,r}$.
	\end{theorem}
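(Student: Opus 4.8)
The forward implication is immediate, so the plan is to establish the converse. Assume ${}^{h}I_f(G_n)={}^{h}I_f(G'_n)$ for all $h\geqslant 0$, and write $G_n=K_{n_1}(u_0=v_0)S_{n_2}$, $G'_n=K_{n'_1}(u'_0=v'_0)S_{n'_2}$, with $m,m'$ the degrees of the roots of $S_{n_2},S_{n'_2}$; since both graphs have $n$ vertices and maximum degree $r$ we have $n_1+n_2-1=n'_1+n'_2-1=n$ and $m+n_1-1=m'+n'_1-1=r$. The argument will mirror the proof of Theorem \ref{t-7}, with Theorem \ref{t-6} playing the role there played by Theorem \ref{t-2}: first use ${}^{0}I_f$ to determine $n_1$ (hence $m$ and $n_2$); then use ${}^{1}I_f,{}^{2}I_f,\dots$ to recover the branch counts $L_1,L_2,\dots$ one at a time; and finally invoke the fact that a generalized starlike tree is determined up to isomorphism by $n_1$ together with its sequence of branch lengths.

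For the first step I would take the expression for ${}^{0}I_f(G_n)$ computed inside the proof of Theorem \ref{t-6}, substitute $m=r-n_1+1$ and $n_2-1=n-n_1$, and absorb every $n_1$-free term into a constant $c(n,r)$; this gives ${}^{0}I_f(G_n)=c(n,r)-n_1f(1)+(n_1-1)f(n_1-1)$. Comparing the two graphs, the equality ${}^{0}I_f(G_n)={}^{0}I_f(G'_n)$ becomes
\[
(n_1-1)f(n_1-1)-(n'_1-1)f(n'_1-1)=(n_1-n'_1)f(1),
\]
i.e. $\dfrac{xf(x)-yf(y)}{x-y}=f(1)$ with $x=n_1-1$, $y=n'_1-1$ whenever $n_1\neq n'_1$. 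Hypothesis $(a)$ forbids this, so $n_1=n'_1$, and therefore $m=m'$ and $n_2=n'_2$.

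For the second step, with $n_1,n_2,m$ (hence $r$) now common to the two graphs, Theorem \ref{t-6} gives ${}^{h}I_f(G_n)=\overline{\lambda'}+\overline{\mu'}\,L_h(G_n)$, in which $\overline{\lambda'}$ is the same function of $n_1,n_2,m,h$, of $L_1(G_n),\dots,L_{h-1}(G_n)$, and of the finitely many values of $f$ on the degree patterns $\mathrm{X'_1},\mathrm{X'_2},\mathrm{Y'_1},\mathrm{Y'_2},\mathrm{Z'_1(a)},\mathrm{Z'_2(a)},\mathrm{Z'_3(a)},\mathrm{M_1(a)},\mathrm{M_2(a)},\mathrm{U},\mathrm{V},\mathrm{W(a)}$ from Lemma \ref{l-5} — all of which are equal for $G_n$ and $G'_n$ once $n_1=n'_1$ — and $\overline{\mu'}=f(\mathrm{X'_1})-f(\mathrm{X'_2})+f(\mathrm{Y'_1})-f(\mathrm{Y'_2})$. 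Writing out $\mathrm{X'_1},\mathrm{X'_2},\mathrm{Y'_1},\mathrm{Y'_2}$, which involve only the entry $m+n_1-1=r$, a string of $2$'s, and terminal $1$'s, and using $f(x_1,\dots,x_i)=f(x_i,\dots,x_1)$, I would rewrite $\overline{\mu'}=g_{h-1}(r)-g_{h-1}(2)$. Since a starlike-tree root has degree exceeding $2$ we have $r\geqslant 4$, so hypothesis $(b)$ with $t=h-1$ yields $\overline{\mu'}\neq 0$ for every $h\geqslant 1$. An induction on $h$ then finishes the second step: for $h=1$ the term $\overline{\lambda'}$ contains no $L_i$, so ${}^{1}I_f(G_n)={}^{1}I_f(G'_n)$ already forces $L_1(G_n)=L_1(G'_n)$; and if $L_i(G_n)=L_i(G'_n)$ for every $i<h$, the two $\overline{\lambda'}$ terms coincide, whence $\overline{\mu'}\neq 0$ forces $L_h(G_n)=L_h(G'_n)$. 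Having matched all branch counts as well as $n_1$, we conclude $G_n\cong G'_n$.

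The hard part is the first step: recovering $n_1$, equivalently the root degree $m$, from the single real number ${}^{0}I_f(G_n)$. Hypothesis $(a)$ is exactly what makes the functional equation $xf(x)-yf(y)=(x-y)f(1)$ have no solution with $x\neq y$, but as stated it covers only $x,y\geqslant 3$, i.e. $n_1,n'_1\geqslant 4$. The degenerate values $n_1\in\{2,3\}$, where $K_{n_1}$ collapses to an edge or a triangle, fall outside this range and would need a separate treatment — either a mild strengthening of $(a)$ so that it holds for all $x,y\geqslant 1$, or an argument exploiting that $n_1=2$ is the unique value for which $G_n$ is acyclic while $n_1\geqslant 3$ forces the presence of a triangle. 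Everything after the first step is routine and structurally parallel to the proof of Theorem \ref{t-7}.
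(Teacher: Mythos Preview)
Your argument is essentially the paper's own proof: deduce $n_1=n'_1$ from ${}^0I_f$ via hypothesis~(a), then recover $L_1,L_2,\dots$ inductively from Theorem~\ref{t-6} using $\overline{\mu'}\neq 0$ via hypothesis~(b). Two remarks: your computation $\overline{\mu'}=g_{h-1}(r)-g_{h-1}(2)$ with $r=m+n_1-1$ is the correct one (the paper writes $g_{h-1}(m)$, which is a slip, since the entry in $\mathrm{X'_1},\mathrm{X'_2}$ is $m+n_1-1$, not $m$); and the edge case $n_1\in\{2,3\}$ you flag, where $n_1-1<3$ falls outside the stated range of~(a), is not addressed by the paper either---it simply applies~(a) without comment---so your proof is at least as complete as the original, and your suggested fixes (extend~(a) to all $x,y\geqslant 1$, or separate the acyclic case $n_1=2$) are reasonable ways to make the statement airtight.
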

	
	\begin{proof}
		If $G_{n}\cong G^{\prime}_{n}$, then it is clearly that $^{h}I_f(G_{n})=^{h}I_f(G^{\prime}_{n})$.
		
		Let $G_n=K_{n_{1}}(u_{0}=v_{0})S_{n_{2}}, G'_n=K_{n'_{1}}(u'_{0}=v'_{0})S_{n'_{2}}\in \mathcal{G}_{n,r}$, where $n=n_1+n_2-1=n'_1+n'_2-1$ and $r=n_1+m-1=n'_1+m'-1$, $m, m'$ are the degrees of roots of $S_{n_2}$ and $S_{n'_2}$, respectively.
		
		If $^{h}I_f(G_{n})=^{h}I_f(G^{\prime}_{n})$ for all $h\geqslant 0$, then
		\begin{align*}
			^{0}I_f(G_{n})&=\sum_{v\in V(G_{n})}\omega(d(v))f(d(v))\\&=mf(1)+f(m+n_1-1)+(n-m-n_1)f(2)+(n_1-1)f(n_1-1)\\
			&=(n_1-1)f(n_1-1)-(n_1-1)f(1)+rf(1)+f(r)+(n-r-1)f(2)\\
		\end{align*}
		from the proof of Theorem \ref{t-6}. By $^{0}I(G_{n})=^{0}I(G^{\prime}_{n})$, we have
		\begin{align*} (n_1-1)f(n_1-1)-(n_1-1)f(1)+rf(1)+f(r)+(n-r-1)f(2)\\=(n'_1-1)f(n'_1-1)-(n'_1-1)f(1)+rf(1)+f(r)+(n-r-1)f(2)
		\end{align*}
		i.e., $$f(1)[(n'_1-1)-(n_1-1)]=(n'_1-1)f(n'_1-1)-(n_1-1)f(n_1-1).$$
		We can obtain $n_1=n'_1$ since $\frac{xf(x)-yf(y)}{x-y}\neq f(1)$ for $x\neq y$, and $n_2=n'_2$ by $n=n_1+n_2-1=n'_1+n'_2-1$, $m=m'$ by $r=n_1+m-1=n'_1+m'-1$.
		
		Again,
		\begin{equation*}
			\begin{aligned}
				^{1}I_f(G_{n})=&\overline{\lambda^{\prime}}\left[m,n_{1},n_{2},f(m+n_{1}-1,1),f(m+n_{1}-1,2),f(1,2),f(2,2)\right]\\
				&+\left[f(m+n_{1}-1,1)-f(m+n_{1}-1,2)-f(1,2)+f(2,2)\right]L_{1}(G_n)
			\end{aligned}
		\end{equation*}
		from the proof of Theorem \ref{t-6}. Using $^{1}I_f(G_{n})=^{1}I_f(G^{\prime}_{n})$, we have
		\begin{equation*}
			\begin{aligned}
				&\overline{\lambda^{\prime}}\left[m,n_{1},n_{2},f(m+n_{1}-1,1),f(m+n_{1}-1,2),f(1,2),f(2,2)\right]\\
				&+\left[f(m+n_{1}-1,1)-f(m+n_{1}-1,2)-f(1,2)+f(2,2)\right]L_{1}(G_n)\\
				=&\overline{\lambda^{\prime}}\left[m',n'_{1},n'_{2},f(m'+n'_{1}-1,1),f(m'+n'_{1}-1,2),f(1,2),f(2,2)\right]\\
				&+\left[f(m'+n'_{1}-1,1)-f(m'+n'_{1}-1,2)-f(1,2)+f(2,2)\right]L_{1}(G'_n).
			\end{aligned}
		\end{equation*}
		And $L_{1}(G_{n})=L_{1}(G'_{n})$ since $m=m'>2$ and $g_{0}(x)=f(x,1)-f(x,2)\neq g_{0}(2)$ for $x\neq 2$.
		\begin{align*}
			&\text{From Theorem \ref{t-6},} ^{h}I_f(G_{n})=\overline{\lambda^{'}}+\overline{\mu^{'}}L_{h}(G_{n}), \text{where }\\
			&\overline{\lambda'}=\overline{\lambda'}\left[n_1,n_2,m,h,L_{1},\cdots,L_{h-1},f(\mathrm{X'_{1}}),f(\mathrm{X'_{2}}),f(\mathrm{Y'_{1}}),f(\mathrm{Y'_{2}}),f(\mathrm{Z'_{1}(a)}),f(\mathrm{Z'_{2}(a)}),f(\mathrm{Z'_{3}(a)}),\right.\\
			&\qquad\quad \left.f(\mathrm{M_1(a)}), f(\mathrm{M_2(a)}),f(\mathrm{U}),f(\mathrm{V}),f(\mathrm{W(a)})\right]
		\end{align*}
		and
		\begin{align*}
			\overline{\mu'}&=f(X'_{1})-f(X'_{2})+f(Y'_{1})-f(Y'_{2})\\
			&=f(m,\underbrace{2,\cdots,2}_{h-1},1)-f(m,\underbrace{2,\cdots,2}_{h})+f(\underbrace{2,\cdots,2}_{h+1})-f(1,\underbrace{2,\cdots,2}_{h})\\
			&=g_{h-1}(m)-g_{h-1}(2)\neq 0
		\end{align*}
		Let $L_{1}(G_{n})=L_{1}(G'_{n}), L_{2}(G_{n})=L_{2}(G'_{n}), L_{h-1}(G_{n})=L_{h-1}(G'_{n})$. By $^{h}I_f(G_{n})=^{h}I_f(G^{\prime}_{n})$, we can get $L_{h}(G_{n})=L_{h}(G'_{n})$. 	
		
		Hence, $G_{n}\cong G_{n}^{\prime}$.
		
	\end{proof}

	\section{Applications}
	
	In the previous section we give some conditions for the function $f$ that allow the corresponding higher-order invariants $^{h}I_f$ to determine graphs in some graph families. In this section, we will give some concrete examples, i.e., there are some higher-order topological indices satisfy the sufficient conditions.
	
	\subsection{The higher-order connectivity index}
	
	The first degree-based topological index was put forward in 1975 by Randi\'{c} in his paper \cite{delorme2002randic}. This index was defined as
	$$\xi(G)=\sum_{xy\in E(G)}\frac{1}{\sqrt{d(x)d(y)}}.$$
	
	Let $f:\bigcup_{i=1}^{\infty} \mathbb{Z_{+}}^{i} \rightarrow \mathbb{R}$ with $f(x_1,x_2,\cdots,x_i)=\frac{1}{\sqrt{x_{1}x_{2}\cdots x_{i}}}$ for $i=1,2,\cdots$, then the corresponding higher-order invariants $^{h}I_f$ is just the $h$-connectivity index, see \cite{rada2002higher}, defined as
	$$^{h}\chi(G)=\sum\limits_{v_{0}v_{1}v_{2}\cdots v_{h}}\frac{1}{\sqrt{d_{0}d_{1}\cdots d_{h}}},$$
	where the sum runs over all paths $v_{0}v_{1}v_{2}\cdots v_{h}$ of length $h$ and $d_{i}=d(v_i)$ is the degree of vertex $v_i$ in $G$.
	
	Let's check that function $f$ satisfies the condition of Theorem \ref{t-7}:
	
	(a) $f(x)-f(y)=f'(\xi)(x-y)$, where $\xi>2$ since $x,y\geqslant 2$, then $\frac{f(x)-f(y)}{x-y}=f'(\xi)=-\frac{1}{2}\frac{1}{\xi\sqrt{\xi}}>-\frac{1}{4\sqrt{2}}>f(2)-f(1)=\frac{1}{\sqrt{2}}-1$. And $\frac{f(x)-f(y)}{x-y}\neq f(2)-f(1)$ for $x\neq y$;
	
	(b) $g_{t}(x)=\frac{1}{\sqrt{2^tx}}-\frac{1}{\sqrt{2^{t+1}x}}\neq \frac{1}{\sqrt{2^{t+1}}}-\frac{1}{\sqrt{2^{t+2}}}=g_{t}(2)$ for $x\neq 2$ and $t=0,1,\cdots$.
	
	By Theorem \ref{t-7}, the following result is immediate.
	
	\begin{corollary}\cite{cai2024}
		Let $S_n, S'_n$  be starlike trees with $n$ vertices. Then $S_n\cong S'_n$ if and only if $^{h}\chi(S_{n})=^{h}\chi(S^{\prime}_{n})$ for all $h\geq 0$.
	\end{corollary}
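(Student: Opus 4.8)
The plan is to deduce the corollary directly from Theorem \ref{t-7} by checking that the Randi\'c-type weight $f(x_1,\dots,x_i)=\left(x_1x_2\cdots x_i\right)^{-1/2}$ satisfies its two hypotheses. First I would note that $f$ is invariant under reversing its argument list, so $^{h}\chi(G)={}^{h}I_f(G)$ is a genuine $h$-order invariant in the sense of the paper; in particular, for starlike trees $^{h}\chi(S_n)={}^{h}\chi(S'_n)$ for all $h\ge0$ is the same statement as $^{h}I_f(S_n)={}^{h}I_f(S'_n)$ for all $h\ge0$.

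To verify condition (a), I would apply the mean value theorem to the single-variable restriction $f(x)=x^{-1/2}$: for distinct integers $x,y\ge 3$ there is $\xi$ strictly between them with $\frac{f(x)-f(y)}{x-y}=f'(\xi)=-\frac{1}{2\xi^{3/2}}$. Since $\xi>3>2$, this slope lies in the open interval $\left(-\frac{1}{4\sqrt2},\,0\right)$, whereas $f(2)-f(1)=\frac{1}{\sqrt2}-1$. The one elementary computation actually needed is the strict inequality $-\frac{1}{4\sqrt2}>\frac{1}{\sqrt2}-1$, which shows $f(2)-f(1)$ does not belong to that interval; hence $\frac{f(x)-f(y)}{x-y}\neq f(2)-f(1)$ for all such $x,y$.

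For condition (b), I would substitute directly. Writing $g_t(x)=f\!\left(x,\underbrace{2,\dots,2}_{t},1\right)-f\!\left(x,\underbrace{2,\dots,2}_{t+1}\right)$, the products of the arguments are $2^{t}x$ and $2^{t+1}x$, so
\[
g_t(x)=\frac{1}{\sqrt{2^{t}x}}-\frac{1}{\sqrt{2^{t+1}x}}=\frac{\sqrt2-1}{\sqrt2}\cdot\frac{1}{\sqrt{2^{t}x}},
\]
which is strictly decreasing in $x$ for each fixed $t\ge 0$, hence injective; therefore $g_t(x)\neq g_t(2)$ for every $x\neq 2$ and every $t$.

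Having established (a) and (b), Theorem \ref{t-7} applies to this choice of $f$, and combined with the identity $^{h}\chi={}^{h}I_f$ it yields exactly the asserted equivalence $S_n\cong S'_n \iff {}^{h}\chi(S_n)={}^{h}\chi(S'_n)$ for all $h\ge0$. I do not anticipate a real obstacle here: all of the structural content is already packaged in Theorems \ref{t-2} and \ref{t-7}, and what remains is only the secant-slope estimate and the monotonicity of $g_t$ recorded above, the sole mildly delicate point being the numerical comparison of the two constants in part~(a).
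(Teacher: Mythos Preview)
Your proposal is correct and follows essentially the same route as the paper: both verify hypotheses (a) and (b) of Theorem~\ref{t-7} for $f(x_1,\dots,x_i)=(x_1\cdots x_i)^{-1/2}$, using the mean value theorem for (a) and a direct computation of $g_t(x)$ for (b). Your argument is in fact slightly more careful (noting $\xi>3$ rather than merely $\xi>2$, and explicitly observing that $g_t$ is strictly decreasing), but the substance is the same.
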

	
	Similarly, the function $f$ also satisfies the condition of Theorem \ref{t-8}, since $\frac{xf(x)-yf(y)}{x-y}=\frac{1}{2\sqrt{\xi}}\neq 1=f(1)$, where $\xi>2$. By Theorem \ref{t-8}, the following result is immediate.
	
	\begin{corollary}
		Let $G_n, G'_n\in\mathcal{G}'_{n,r}$ be generalized starlike trees with $n$ vertices and maximal degree $r$. Then $G_n\cong G'_n$  if and only if $^{h}\chi(G_{n})=^{h}\chi(G^{\prime}_{n})$ for all $h\geqslant 0$.
	\end{corollary}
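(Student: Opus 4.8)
The plan is to obtain the statement as an immediate consequence of Theorem~\ref{t-8}, applied to the function $f:\bigcup_{i=1}^{\infty}\mathbb{Z}_{+}^{i}\to\mathbb{R}$ defined by $f(x_1,\dots,x_i)=(x_1x_2\cdots x_i)^{-1/2}$, for which $^{h}I_f$ is precisely the $h$-connectivity index $^{h}\chi$. Since the product $x_1x_2\cdots x_i$ is invariant under reversing the order of its factors, $f$ satisfies $f(x_1,\dots,x_i)=f(x_i,\dots,x_1)$, so it only remains to verify hypotheses $(a)$ and $(b)$ of Theorem~\ref{t-8}.

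For $(a)$, set $\phi(x)=xf(x)=\sqrt{x}$. For $x\neq y$ with $x,y\geqslant 3$, the mean value theorem gives a $\xi$ strictly between $x$ and $y$ with $\frac{xf(x)-yf(y)}{x-y}=\phi'(\xi)=\frac{1}{2\sqrt{\xi}}$; as $\xi>2$ this is at most $\frac{1}{2\sqrt{2}}$, hence strictly less than $1=f(1)$. Thus $\frac{xf(x)-yf(y)}{x-y}\neq f(1)$ and $(a)$ holds.

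For $(b)$, the degree sequence $(x,\underbrace{2,\dots,2}_{t},1)$ has degree product $2^{t}x$ while $(x,\underbrace{2,\dots,2}_{t+1})$ has degree product $2^{t+1}x$, so
\[
g_t(x)=\frac{1}{\sqrt{2^{t}x}}-\frac{1}{\sqrt{2^{t+1}x}}=\Bigl(1-\tfrac{1}{\sqrt{2}}\Bigr)\frac{1}{\sqrt{2^{t}}}\cdot\frac{1}{\sqrt{x}},
\]
which is strictly decreasing in $x>0$ for each fixed $t$. Hence $g_t(x)\neq g_t(2)$ whenever $x\neq 2$, for every $t=0,1,\dots$, so $(b)$ holds as well.

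With $(a)$ and $(b)$ in hand, Theorem~\ref{t-8} applies verbatim to $f$, yielding $G_n\cong G'_n$ if and only if $^{h}\chi(G_n)={}^{h}\chi(G'_n)$ for all $h\geqslant 0$ (equivalently, for $0\leqslant h\leqslant\rho$), which is the claim. There is no substantive obstacle here; the only step that needs a moment of care is reading off the degree products along the two special paths in $(b)$, since it is the resulting strict monotonicity of $g_t$ that excludes the forbidden equality $g_t(x)=g_t(2)$.
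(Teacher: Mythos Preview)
Your proof is correct and follows essentially the same approach as the paper: apply Theorem~\ref{t-8} to $f(x_1,\dots,x_i)=(x_1\cdots x_i)^{-1/2}$ after verifying hypotheses $(a)$ and $(b)$, using the mean value theorem on $\phi(x)=\sqrt{x}$ for $(a)$ and the explicit form of $g_t(x)$ for $(b)$. The paper's version is terser (it reuses the $(b)$ verification from the starlike-tree corollary and records only the new $(a)$ check), but the substance is identical.
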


	\subsection{The higher-order sum-connectivity index}
	
	The sum-connectivity index (see \cite{bzhou2009})
	$$\chi_s(G)=\sum\limits_{xy\in E(G)}\frac{1}{\sqrt{d_{x}+d_{y}}}.$$
	Imitating the higher-order connectivity index, we introduce the higher-order sum-connectivity index as
	$$^{h}\chi_s(G)=\sum\limits_{v_{0}v_{1}v_{2}\cdots v_{h}}\frac{1}{\sqrt{d_{0}+d_{1}+\cdots +d_{h}}}$$
	where the sum runs over all paths $v_{0}v_{1}v_{2}\cdots v_{h}$ of length $h$ and $d_{i}=d(v_i)$ is the degree of vertex $v_i$ in $G$, i.e., $^{h}\chi_s=^{h}I_f$ with $f:\bigcup_{i=1}^{\infty} \mathbb{Z_{+}}^{i} \rightarrow \mathbb{R}$ and $f(x_1,x_2,\cdots,x_i)=\frac{1}{\sqrt{x_{1}+x_{2}+\cdots +x_{i}}}$ for $i=1,2,\cdots$.
	
	It can easily be checked that this function $f$ satisfies the condition of Theorem \ref{t-7} and Theorem \ref{t-8}, and the following results are directly derived from them.
	
	\begin{corollary}
		Let $S_n, S'_n$  be starlike trees with $n$ vertices. Then $S_n\cong S'_n$ if and only if $^{h}\chi_s(S_{n})=^{h}\chi_s(S^{\prime}_{n})$ for all $h\geqslant 0$.
	\end{corollary}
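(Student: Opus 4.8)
The plan is to deduce this corollary directly from Theorem~\ref{t-7}. Since $^{h}\chi_s={}^{h}I_f$ for the choice $f(x_1,\dots,x_i)=(x_1+\cdots+x_i)^{-1/2}$, and since this $f$ is symmetric in its arguments (it depends only on the sum $x_1+\cdots+x_i$), it suffices to verify that $f$ satisfies conditions $(a)$ and $(b)$ of that theorem; the stated equivalence then follows at once.

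To check $(a)$, I would note that the one-variable specialization is $f(x)=x^{-1/2}$, so the mean value theorem gives $\dfrac{f(x)-f(y)}{x-y}=f'(\xi)=-\dfrac{1}{2\xi^{3/2}}$ for some $\xi$ strictly between $x$ and $y$. When $x\neq y$ and $x,y\geqslant 3$ this forces $\xi>3$, whence $-\dfrac{1}{2\xi^{3/2}}>-\dfrac{1}{2\cdot 3^{3/2}}>\dfrac{1}{\sqrt 2}-1=f(2)-f(1)$, so the difference quotient can never equal $f(2)-f(1)$. To check $(b)$, I would compute $g_t(x)=f(x,\underbrace{2,\dots,2}_{t},1)-f(x,\underbrace{2,\dots,2}_{t+1})=\dfrac{1}{\sqrt{x+2t+1}}-\dfrac{1}{\sqrt{x+2t+2}}$, and then observe that, regarded as a function of the real variable $x\geqslant 1$, its derivative $-\tfrac12(x+2t+1)^{-3/2}+\tfrac12(x+2t+2)^{-3/2}$ is strictly negative; hence $g_t$ is strictly decreasing, hence injective, so $g_t(x)\neq g_t(2)$ for every $x\neq 2$.

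With $(a)$ and $(b)$ established, Theorem~\ref{t-7} applies verbatim and yields the corollary. I do not expect any genuine obstacle here: the whole argument reduces to a pair of elementary monotonicity estimates. The only point warranting a moment's care is the numerical separation in $(a)$, namely the crude inequality $-\frac{1}{2\cdot 3^{3/2}}>\frac{1}{\sqrt 2}-1$, which is what keeps the difference quotient strictly away from $f(2)-f(1)$ for all admissible $x,y$.
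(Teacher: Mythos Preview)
Your proposal is correct and follows exactly the approach the paper intends: the paper simply asserts that ``it can easily be checked that this function $f$ satisfies the condition of Theorem~\ref{t-7},'' and you have supplied precisely that routine verification. Your arguments for both $(a)$ (via the mean value theorem and the numerical bound $-\tfrac{1}{2\cdot 3^{3/2}}>\tfrac{1}{\sqrt 2}-1$) and $(b)$ (via strict monotonicity of $g_t$) are sound and complete.
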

	
	\begin{corollary}
		Let $G_n, G'_n\in\mathcal{G}'_{n,r}$ be generalized starlike trees with $n$ vertices and maximal degree $r$. Then $G_n\cong G'_n$  if and only if $^{h}\chi_s(G_{n})=^{h}\chi_s(G^{\prime}_{n})$ for all $h\geqslant 0$.
	\end{corollary}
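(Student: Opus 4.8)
The plan is to obtain this corollary as a direct application of Theorem~\ref{t-8} to the function $f\colon\bigcup_{i\geqslant 1}\mathbb{Z}_+^i\to\mathbb{R}$ given by $f(x_1,x_2,\dots,x_i)=\dfrac{1}{\sqrt{x_1+x_2+\cdots+x_i}}$. By definition $^{h}I_f={}^{h}\chi_s$, and $f$ is visibly symmetric, i.e. $f(x_1,\dots,x_i)=f(x_i,\dots,x_1)$, since the argument of the square root is symmetric in $x_1,\dots,x_i$. The ``only if'' direction is immediate: isomorphic graphs have identical path sequences, hence the same value of $^{h}\chi_s$ for every $h$. For the converse it suffices to verify hypotheses $(a)$ and $(b)$ of Theorem~\ref{t-8}, after which that theorem yields $G_n\cong G'_n$ at once.

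For condition $(a)$ I would use $xf(x)=x/\sqrt{x}=\sqrt{x}$, so that for $x\neq y$
\[
\frac{xf(x)-yf(y)}{x-y}=\frac{\sqrt{x}-\sqrt{y}}{x-y}=\frac{1}{\sqrt{x}+\sqrt{y}}.
\]
When $x,y\geqslant 3$ one has $\sqrt{x}+\sqrt{y}\geqslant 2\sqrt{3}>1$, so the right-hand side is strictly less than $1=f(1)$; hence $(a)$ holds.

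For condition $(b)$ I would expand
\[
g_t(x)=f\bigl(x,\underbrace{2,\dots,2}_{t},1\bigr)-f\bigl(x,\underbrace{2,\dots,2}_{t+1}\bigr)=\frac{1}{\sqrt{x+2t+1}}-\frac{1}{\sqrt{x+2t+2}},
\]
and then show that $x\mapsto g_t(x)$ is strictly decreasing on $(0,\infty)$ for each fixed $t$. Writing $s=x+2t$, this amounts to $\phi(s)=(s+1)^{-1/2}-(s+2)^{-1/2}$ being strictly decreasing, which follows from $\phi'(s)=\tfrac{1}{2}\bigl((s+2)^{-3/2}-(s+1)^{-3/2}\bigr)<0$, or, calculus-free, from the convexity and monotone decrease of $u\mapsto u^{-1/2}$, which forces the gap between its values at $s+1$ and $s+2$ to shrink as $s$ grows. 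A strictly monotone function is injective, so $g_t(x)\neq g_t(2)$ for every $x\neq 2$ and every $t\geqslant 0$, which is exactly $(b)$. With $(a)$ and $(b)$ in hand, Theorem~\ref{t-8} gives the corollary. I expect no real obstacle here; the only step requiring a little care is the monotonicity of $g_t$, which nonetheless reduces to the elementary fact just noted.
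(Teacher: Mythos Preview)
Your proposal is correct and follows exactly the approach of the paper: apply Theorem~\ref{t-8} by verifying that $f(x_1,\dots,x_i)=1/\sqrt{x_1+\cdots+x_i}$ satisfies conditions~$(a)$ and~$(b)$. The paper merely asserts that these conditions ``can easily be checked'' without giving details, whereas you supply the explicit computations (the identity $xf(x)=\sqrt{x}$ for~$(a)$ and the monotonicity of $g_t$ for~$(b)$); these verifications are all correct.
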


	\subsection{The higher-order Second hyper-Zagreb index}    	
	
	The Zagreb indices were first introduced by Gutman \cite{gutman1972graph} and found to closely correlate the total $\pi$-electron energy of alternant hydrocarbons. Successful applicability of Zagreb indices in reticular chemistry motivated the researchers to introduce other variants of these Zagreb indices \cite{zhou2004zagreb,zeng2021open}. One of the variant known as the second hyper-Zagreb index is defined as
	$$M(G)=\sum_{xy\in E(G)}(d_{x}d_{y})^{2}.$$
	We generalize the second hyper-Zagreb index to the higher-order Second hyper-Zagreb index
	$$^{h}M(G)=\sum\limits_{v_{0}v_{1}v_{2}\cdots v_{h}}(d_{0}d_{1}\cdots d_{h})^2,$$
	where the sum runs over all paths $v_{0}v_{1}v_{2}\cdots v_{h}$ of length $h$ and $d_{i}=d(v_i)$ is the degree of vertex $v_i$ in $G$, i.e., $^{h}M=^{h}I_f$ with $f:\bigcup_{i=1}^{\infty} \mathbb{Z_{+}}^{i} \rightarrow \mathbb{R}$ and $f(x_1,x_2,\cdots,x_i)=(x_{1}x_{2}\cdots x_{i})^2$ for $i=1,2,\cdots$.
	
	It can easily be checked that this function $f$ satisfies the condition of Theorem \ref{t-7} and Theorem \ref{t-8}, and the following results are directly derived from them.
	
	\begin{corollary}
		Let $S_n, S'_n$  be starlike trees with $n$ vertices. Then $S_n\cong S'_n$ if and only if $^{h}M(S_{n})=^{h}M(S^{\prime}_{n})$ for all $h\geqslant 0$.
	\end{corollary}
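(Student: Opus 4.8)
The plan is to deduce this corollary directly from Theorem~\ref{t-7} applied to the function $f:\bigcup_{i=1}^{\infty}\mathbb{Z}_{+}^{i}\to\mathbb{R}$ given by $f(x_{1},x_{2},\dots,x_{i})=(x_{1}x_{2}\cdots x_{i})^{2}$, for which $^{h}M={}^{h}I_{f}$. First I would record the two cheap facts: $f$ is invariant under reversal of its argument list, since $f(x_{1},\dots,x_{i})=(x_{1}\cdots x_{i})^{2}=(x_{i}\cdots x_{1})^{2}=f(x_{i},\dots,x_{1})$, so the symmetry hypothesis of Theorem~\ref{t-7} is satisfied; and the ``only if'' direction of the corollary is immediate because $^{h}M$ is a graph-isomorphism invariant. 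It then remains only to check conditions $(a)$ and $(b)$ of Theorem~\ref{t-7}.

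For $(a)$, here $f(x)=x^{2}$, so $f(x)-f(y)=(x-y)(x+y)$ and hence $\dfrac{f(x)-f(y)}{x-y}=x+y$. For $x,y\geqslant 3$ with $x\neq y$ this equals at least $7$, while $f(2)-f(1)=4-1=3$, so $\dfrac{f(x)-f(y)}{x-y}\neq f(2)-f(1)$, as required. This is exactly the range that is needed, since in the proof of Theorem~\ref{t-7} the inequality is only ever invoked for the root degrees $m,m'$ of starlike trees, which satisfy $m,m'>2$.

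For $(b)$, I would compute, for every $t\geqslant 0$ and every positive integer $x$,
\[
g_{t}(x)=f\bigl(x,\underbrace{2,\dots,2}_{t},1\bigr)-f\bigl(x,\underbrace{2,\dots,2}_{t+1}\bigr)=(x\cdot 2^{t})^{2}-(x\cdot 2^{t+1})^{2}=-3\cdot 4^{t}\,x^{2}.
\]
Since $x\mapsto -3\cdot 4^{t}x^{2}$ is strictly decreasing on $\mathbb{Z}_{+}$, the equality $g_{t}(x)=g_{t}(2)=-3\cdot 4^{t+1}$ forces $x^{2}=4$, i.e.\ $x=2$; equivalently $g_{t}(x)\neq g_{t}(2)$ whenever $x\neq 2$, for all $t=0,1,\dots$. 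Both hypotheses of Theorem~\ref{t-7} being verified, the corollary follows at once.

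I do not expect any genuine obstacle: the whole argument is the pair of elementary inequalities above, in exact parallel with the check already carried out for the higher-order connectivity index in Subsection~4.1. The only point that warrants a moment's attention is bookkeeping — confirming that the objects appearing in the hypotheses of Theorem~\ref{t-7} (degrees $\geqslant 3$ in $(a)$, and the family of maps $g_{t}$ for $t\geqslant 0$ in $(b)$) are precisely the ones we have tested, which they are.
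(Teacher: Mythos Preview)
Your proposal is correct and follows exactly the paper's approach: the paper simply asserts that ``it can easily be checked that this function $f$ satisfies the condition of Theorem~\ref{t-7}'' and derives the corollary immediately, and you have supplied precisely that check of conditions $(a)$ and $(b)$ for $f(x_1,\dots,x_i)=(x_1\cdots x_i)^2$. Your computations are accurate, and in fact you give more detail than the paper itself does.
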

	
	\begin{corollary}
		Let $G_n, G'_n\in\mathcal{G}'_{n,r}$ be generalized starlike trees with $n$ vertices and maximal degree $r$. Then $G_n\cong G'_n$  if and only if $^{h}M(G_{n})=^{h}M(G^{\prime}_{n})$ for all $h\geqslant 0$.
	\end{corollary}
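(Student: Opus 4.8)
The plan is to obtain this corollary as an immediate application of Theorem~\ref{t-8}: all that is required is to check that the function $f(x_1,x_2,\dots,x_i)=(x_1x_2\cdots x_i)^2$ satisfies the two hypotheses of that theorem. The symmetry condition $f(x_1,\dots,x_i)=f(x_i,\dots,x_1)$ holds trivially, since the product $x_1x_2\cdots x_i$, and hence its square, is invariant under reversing the order of the factors.

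For hypothesis~(a) I would specialize $f$ to a single argument, where $f(x)=x^2$, so that $xf(x)-yf(y)=x^3-y^3$ and therefore $\frac{xf(x)-yf(y)}{x-y}=x^2+xy+y^2$ for $x\neq y$. When $x,y\geqslant 3$ this is at least $27$, whereas $f(1)=1$; hence $\frac{xf(x)-yf(y)}{x-y}\neq f(1)$ whenever $x\neq y$ and $x,y\geqslant 3$. For hypothesis~(b) I would compute, for each $t\geqslant 0$,
\[
g_t(x)=f(x,\underbrace{2,\dots,2}_{t},1)-f(x,\underbrace{2,\dots,2}_{t+1})=x^2\cdot 4^{t}-x^2\cdot 4^{t+1}=-3\cdot 4^{t}x^2 .
\]
Since $x\mapsto -3\cdot 4^{t}x^2$ is strictly monotone on the positive reals, $g_t(x)=g_t(2)$ forces $x^2=4$, i.e.\ $x=2$; thus $g_t(x)\neq g_t(2)$ for all $x\neq 2$ and all $t\geqslant 0$.

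With (a) and (b) verified, Theorem~\ref{t-8} applies directly and gives $G_n\cong G'_n$ if and only if $^{h}M(G_n)={}^{h}M(G'_n)$ for all $h\geqslant 0$, exactly as in the two preceding subsections. There is no real obstacle here; the only point requiring a little care is bookkeeping — keeping the boundary evaluation $f(1)$ (a single degree‑$1$ vertex) separate from the longer tuples, and tracking the powers of $4$ in $g_t$ so that the two terms genuinely do not cancel — after which the conclusion is automatic, since Theorem~\ref{t-8} already encodes the recursive recovery of $n_1$ and then of each $L_h$ in turn from the numbers $^{h}M(G_n)$.
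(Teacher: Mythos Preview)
Your proposal is correct and follows exactly the paper's approach: the paper simply asserts that ``it can easily be checked that this function $f$ satisfies the condition of Theorem~\ref{t-7} and Theorem~\ref{t-8}'' and derives the corollary immediately, whereas you supply the explicit verifications of (a) and (b). Your computations are accurate (in particular $g_t(x)=-3\cdot 4^t x^2$ is injective on positive integers, so $g_t(x)=g_t(2)$ forces $x=2$), and the appeal to Theorem~\ref{t-8} then gives the result at once.
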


	\section{Conclusion}
	
	We introduce the higher order invariant or higher order topological index of a graph based on the path sequence. The mathematical properties of the higher order invariant for some graphs are obtained. Some sufficient conditions are found, which can be used to determine every graph in some graph families by the higher order invariants. We hope to further solve the isomorphic problem of graphs with such higher order invariants.

	{\bf Acknowledgements} This work is supported by the National Natural Science Foundation of China (No.12201634) and the Hunan Provincial Natural Science Foundation of China (2020JJ4423,2023JJ30070). 

\end{document}